\theoremstyle{plain}
\newtheorem{thm}{Theorem}
\newtheorem{lemma}[thm]{Lemma}
\newtheorem{cor}[thm]{Corollary} 
\newtheorem{prop}[thm]{Proposition}
\newtheorem{conj}[thm]{Conjecture}
\theoremstyle{definition}
\newtheorem{defn}[thm]{Definition}
\theoremstyle{remark}
\newtheorem*{rem}{Remark}
\numberwithin{thm}{section}
\numberwithin{equation}{section}
\newcommand{\abs}[1]{\left\lvert#1\right\rvert}
\newcommand{\paren}[1]{\left( #1 \right)}
\newcommand{\ol}{\overline}
\newcommand{\C}{\mathbb C}
\newcommand{\E}{\mathbb E}
\newcommand{\F}{\mathbb F}
\newcommand{\R}{\mathbb R}
\newcommand{\Z}{\mathbb Z}
\DeclareMathOperator{\rank}{rank}
\DeclareMathOperator{\arank}{arank}
\DeclareMathOperator{\prank}{prank}
\DeclareMathOperator{\codim}{codim}
\DeclareMathOperator{\id}{id}
\DeclareMathOperator{\Poly}{Poly}
\DeclareMathOperator{\CSM}{CSM}
\DeclareMathOperator{\nCSM}{nCSM}
\begin{document}

\begin{frontmatter}[classification=text]
%% EDITOR: this will force the keywords to appear right after the Abstract.
%%   If the abstract is too long and would force the keywords off the
%%   front page, please comment out % [classification=text] above
%%   This way the keywords will be floated on the bottom of the first page
%%   even though the Abstract spills over to the next page.

%%% AUTHOR: Title goes here.  This line is optional.  You must use it
%%   if title has footnote attached or requires nontrivial typesetting,
%%   e.g., inclusion of linebreaks to force nice layout.
%\title{Short Proof of R\"odl's $n^{\log\log n}$ Bound\footnote{This is a footnote to the title}} %% please capitalize all significant words

%%% AUTHOR:
%%% List all authors. If you wish, place grant acknowledgements in \thanks.
%%% In brackets include a short tag for each author.
\author[jtidor]{Jonathan Tidor\thanks{The author was supported by NSF Graduate Research Fellowship Program DGE-1745302.}}

%%% AUTHOR: Abstract goes here
\begin{abstract}
This paper gives the first quantitative bounds for the inverse theorem for the Gowers $U^4$-norm over $\F_p^n$ when $p=2,3$. We build upon earlier work of Gowers and Mili\'cevi\'c who solved the corresponding problem for $p\geq 5$. Our proof has two main steps: \emph{symmetrization} and \emph{integration} of low-characteristic trilinear forms. We are able to solve the integration problem for all $k$-linear forms, but the symmetrization problem we are only able to solve for trilinear forms. We pose several open problems about symmetrization of low-characteristic $k$-linear forms whose resolution, combined with recent work of Gowers and Mili\'cevi\'c, would give quantitative bounds for the inverse theorem for the Gowers $U^{k+1}$-norm over $\F_p^n$ for all $k,p$.
\end{abstract}
\end{frontmatter}

%%% AUTHOR: body of paper starts here

\section{Introduction}
\label{sec:intro}

A central problem in additive combinatorics is to understand the inverse theory of the Gowers uniformity norms. Given a 1-bounded function $f\colon G\to\C$ with large Gowers uniformity norm where $G$ is a finite abelian group, the goal is to show that $f$ must correlate with some structured object.

This inverse problem has primarily received attention in the cases that $G$ is a cyclic group of prime order or a vector space over a finite field. For $G=\Z/N\Z$ the inverse problem was solved by Green, Tao, and Ziegler \cite{GTZ11, GTZ12} and the structured objects require the theory of nilsequences to describe. For vector spaces over finite fields the situation is known to be somewhat simpler. For $p\geq k$, a 1-bounded function $f\colon\F_p^n\to\C$ has large $U^{k+1}$-norm if and only if it correlates with a (classical) polynomial phase function, i.e., $e^{2\pi i P(x)/p}$ for a polynomial $P\colon\F_p^n\to\F_p$ of degree at most $k$, as shown by Bergelson, Tao, and Ziegler \cite{BTZ10, TZ10} (see also \cite{BSST21} for a discussion of the $p=k$ case). For $p<k$, one needs the theory of non-classical polynomials to describe the structured objects \cite{TZ12}.

The original proofs of these inverse theorems give extremely bad or even ineffective quantitative bounds. Recently Manners proved quantitative bounds for the $U^{k+1}$-inverse theorem over $\Z/N\Z$ \cite{Man18} and Gowers and Mili\'cevi\'c proved quantitative bounds for the $U^{k+1}$-inverse theorem over $\F_p^n$ when $p>k$ \cite{GM17, GM20}.

The goal of this paper is to build upon the work of Gowers and Mili\'cevi\'c to give quantitative bounds for the $U^4$-inverse theorem over $\F_p^n$ in the low-characteristic regime $p=2,3$. Our proof has two main steps: \emph{symmetrization} and \emph{integration} of low-characteristic trilinear forms. We are able to solve the integration problem for all $k$-linear forms, but the symmetrization problem we are only able to solve for trilinear forms.

\subsection{Statement of main result}

In this paper we always use $V$ to denote a finite-dimensional $\F_p$-vector space and we write $\omega=e^{2\pi i/p}$. Given a function $f\colon V\to\C$ and a shift $h\in V$, we write $\partial_h f(x)=f(x+h)\ol{f(x)}$ for the multiplicative derivative.

\begin{defn}
Given a function $f\colon V\to\C$ and $d\geq 2$, the {\textbf{Gowers uniformity norm}} $\|f\|_{U^d}$ is defined by \[\|f\|_{U^d}^{2^d}=\E_{x,h_1,\ldots,h_d\in V}(\partial_{h_1}\cdots\partial_{h_d}f)(x).\]
\end{defn}

This definition has many useful properties, including being a well-defined norm for all $d\geq 2$. Another useful property is the inductive formula $\|f\|_{U^d}^{2^d}=\E_h \|\partial_h f\|_{U^{d-1}}^{2^{d-1}}$.
See \cite[Lemma B.1]{TZ12} for more properties and references about the Gowers uniformity norms.

Consider a function $f\colon V\to G$ where $G$ is an abelian group. Given a shift $h\in V$, we write $\Delta_hf(x)=f(x+h)-f(x)$ for the additive derivative. It follows from the definition of the Gowers uniformity norm that a 1-bounded function $f\colon V\to\C$ satisfies $\|f\|_{U^d}\leq 1$ with equality if and only if $f=e^{2\pi i P}$ where $P\colon V\to\R/\Z$ satisfies $\Delta_{h_1}\Delta_{h_2}\cdots \Delta_{h_d}P(x)=0$. 

\begin{defn}
A \textbf{non-classical polynomial} of degree at most $k$ is a map $P\colon V\to\R/\Z$ that satisfies \[(\Delta_{h_1}\cdots \Delta_{h_{k+1}}P)(x)= 0\] for all $h_1,\ldots,h_{k+1},x\in V$.
We write $\Poly_{\leqslant k}(V\to\R/\Z)$ for the set of non-classical polynomials of degree at most $k$.
\end{defn}

A classical polynomial is a map $P\colon V\to\F_p$ satisfying the same definition. Thus we use $\Poly_{\leqslant k}(V\to\F_p)$ to denote the set of classical polynomials of degree at most $k$. In this paper we commonly abuse notation to identify $\F_p$ with $\{0,1/p,\ldots,(p-1)/p\}\subset\R/\Z$ when convenient. In this way we consider $\Poly_{\leqslant k}(V\to\F_p)$ to be a subset of $\Poly_{\leqslant k}(V\to\R/\Z)$.

\begin{thm}
\label{thm:inverse}
Fix a prime $p$ and $\delta>0$. There exists an $\epsilon>0$ satisfying \[1/\epsilon=\exp\paren{\exp\paren{\exp\paren{O_p\paren{\log(1/\delta)^{O(1)}}}}}\]
such that the following holds. Let $V$ be a finite-dimensional $\F_p$-vector space. Given a function $f\colon V\to\C$ satisfying $\|f\|_\infty\leq1$ and $\|f\|_{U^{4}}>\delta$, there exists a non-classical cubic polynomial $P\in\Poly_{\leqslant 3}(V\to\R/\Z)$ such that \[\abs{\E_{x\in V} f(x)e^{-2\pi i P(x)}}\geq \epsilon.\]
Furthermore, if $p\geq 3$, the polynomial can be taken to be classical.
\end{thm}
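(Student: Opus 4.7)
The plan is to adapt the Gowers--Mili\'cevi\'c strategy to the low-characteristic regime. The starting point is the inductive formula $\|f\|_{U^4}^{16} = \E_h \|\partial_h f\|_{U^3}^8$: from $\|f\|_{U^4} > \delta$, a pigeonholing argument shows that for a set $H \subseteq V$ of density at least $\delta^{O(1)}$, one has $\|\partial_h f\|_{U^3} \geq \delta^{O(1)}$. Applying a quantitative $U^3$-inverse theorem to each such $\partial_h f$ yields a (non-classical when $p=2$) quadratic phase $e^{2\pi i q_h(x)}$ with which $\partial_h f$ correlates. Differentiating $q_h$ twice then produces, for each $h \in H$, a bilinear form $B_h(x,y)$, and successive Cauchy--Schwarz and polynomial-bias steps upgrade the family $\{B_h\}_{h\in H}$ to a genuine trilinear form $T\colon V^3 \to \R/\Z$ with $B_h(x,y) \approx T(h,x,y)$ for most $h\in H$.

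At this point the problem splits into two subproblems that form the central contributions of the paper. The \emph{symmetrization} step aims to show that $T$ is approximately invariant under permutations of its three arguments, up to a low-rank perturbation. For $p \geq 5$ this follows by averaging over $S_3$, but in characteristics $2$ and $3$ this averaging is singular and a different mechanism is required. I would establish $T(h,x,y) \approx T(x,h,y)$ by exploiting the identity $\partial_h\partial_x f = \partial_x\partial_h f$ at the level of the original function $f$, and translating the resulting correlation identities back to the trilinear world via further Cauchy--Schwarz manipulations; the non-classical phases that arise when $p=2$ must be tracked carefully throughout.

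Once a symmetric trilinear form $T$ is in hand, the \emph{integration} step constructs a non-classical cubic $P$ with $\Delta_h\Delta_x\Delta_y P \equiv T(h,x,y)$ and $|\E_x f(x) e^{-2\pi i P(x)}| \geq \epsilon$. Over characteristic at least $5$ one simply sets $P(x) = \tfrac{1}{6}T(x,x,x)$, but for $p \in \{2,3\}$ this division is not available and one must build $P$ inductively on a flag of subspaces using the theory of non-classical polynomials from \cite{TZ12}. When $p = 3$ careful bookkeeping should produce a classical cubic $P$, while for $p=2$ genuinely non-classical cubics are unavoidable. I would aim to solve integration for all $k$-linear forms uniformly in $k$, since the argument appears to generalize cleanly.

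The main obstacle is the symmetrization step: the algebraic fact that $S_3$-averaging is singular modulo $2$ and $3$ means that symmetry must be extracted analytically from $f$ itself rather than combinatorially from the abstract form $T$. This is precisely why the method handles $k=3$ but cannot be pushed to general $k$ without new ideas---symmetrization of $k$-linear forms in low characteristic becomes substantially more delicate for $k \geq 4$, which the paper isolates as an open problem. Quantitatively, each Cauchy--Schwarz step squares the error and each polynomial-bias invocation incurs a polynomial loss, so the three successive reductions (from $U^4$ to $U^3$, then symmetrization, then integration) compose to yield the triply-exponential dependence of $1/\epsilon$ on $1/\delta$.
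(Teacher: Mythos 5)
Your high-level outline---reduce to a trilinear correlation, then symmetrize, then integrate---matches the paper's architecture, and you correctly identify that $S_3$-averaging and division by $3!$ break in low characteristic. You also correctly observe that integration can be done for all $k$ and that symmetrization is the $k$-specific bottleneck. But there are two substantive issues.

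First, and most seriously: your symmetrization target is too weak. You propose to show that $T$ is approximately invariant under permutations, i.e., close in rank to a symmetric trilinear form. Over $\F_2$ this is \emph{not} sufficient to integrate. The total derivative $d^3P$ of a non-classical cubic $P\colon V\to\R/\Z$ over $\F_2$ is a symmetric trilinear form which in addition satisfies the ``non-classical'' identity $T(x,x,y)=T(x,y,y)$, and generic symmetric trilinear forms over $\F_2$ fail this. (In the paper's terminology one needs an $\nCSM$, not just a symmetric form; being symmetric is only the $p\geq 5$ characterization.) Extracting this extra identity is not a formal consequence of symmetry---the paper proves it (\cref{thm:f2-ncsm}) by a separate Cauchy--Schwarz argument that substitutes $z=x+w$ in the correlation inequality, fixes $w_0$, and observes that the resulting exponent $A(x,y)=T(x,x,y)+T(x,y,w_0)$ is a genuine bilinear form to which the Green--Tao symmetry lemma can be applied, yielding exactly that $T(x,x,y)-T(x,y,y)$ has bounded rank. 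Your framework of ``exploiting $\partial_h\partial_x f=\partial_x\partial_h f$'' does not obviously produce this identity, since it concerns commuting the two outermost derivatives rather than repeating an argument. Without this piece, the integration step has nothing to integrate when $p=2$.

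Second, a smaller point: you propose to derive the trilinear form from scratch, by pigeonholing on $\|\partial_h f\|_{U^3}$, applying the $U^3$-inverse theorem fiberwise, and then upgrading the resulting family of bilinear forms $\{B_h\}$ to a global trilinear $T$. This upgrading step (local-to-global bilinear structure) is the entire content of the Gowers--Mili\'cevi\'c paper \cite{GM17} and is extremely nontrivial; the present paper cites their result (\cref{thm:gm-multiaffine-corr}) as a black box producing a triaffine $\phi$ with \[\abs{\E_{x,h_1,h_2,h_3}(\partial_{h_1}\partial_{h_2}\partial_{h_3}f)(x)\omega^{\phi(h_1,h_2,h_3)}}\geq\epsilon,\] then strips $\phi$ to its trilinear part via Cauchy--Schwarz. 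Your description of the integration step (``build $P$ inductively on a flag of subspaces'') is also not how the paper proceeds---it uses a counting argument to show $d^k\colon\Poly_{\leqslant k}\to\nCSM^k$ is surjective by matching dimensions on both sides using the explicit monomial basis from \cref{poly-basis}---but a constructive alternative could plausibly be made to work, so I would not call that a gap, just a different (and less well-specified) route.
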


This theorem was proved with ineffective bounds by Tao and Ziegler \cite[Theorem 1.10]{TZ12} and with effective bounds for $p\geq 5$ by Gowers and Mili\'cevi\'c \cite[Theorem 7]{GM20}. The bounds we prove are of the same shape as Gowers and Mili\'cevi\'c's result.

\subsection{Proof strategy}
We start by summarizing the Gowers-Mili\'cevi\'c proof of the $U^4$-inverse theorem for $p\geq 5$ and then explain where the difficulties arise when $p<5$.

We are given a function $f\colon\F_p^n\to\C$ which is 1-bounded and satisfies $\|f\|_{U^4}>\delta$. By expanding the definition of the $U^4$-norm and applying the $U^2$-inverse theorem, this implies \[\delta^{16}<\|f\|_{U^4}^{16}=\E_{a,b}\|\partial_a\partial_b f\|_{U^2}^4\leq\E_{a,b}\|\widehat{\partial_a\partial_bf}\|_{\infty}^2.\] By an application of Markov's inequality, this implies that there is a fairly dense set $A\subset V^2$ and a function $\phi\colon A\to V$ such that $|\widehat{\partial_a\partial_bf}(\phi(a,b))|$ is large for each $(a,b)\in A$.

One can show that the function $\phi$ has some weak bilinearity properties. The main bulk of the work done by Gowers and Mili\'cevi\'c is to show that this local bilinearity can be upgraded to some global structure. Namely, they prove that $\phi$ agrees with a biaffine map $\Phi\colon V^2\to V$ on a fairly dense set. This immediately implies that there is a triaffine form $T\colon V^3\to\F_p$ that satisfies
\begin{equation}
\label{eq:GM-intro}
\abs{\E_{a,b,c,x}\partial_a\partial_b\partial_cf(x)\omega^{T(a,b,c)}}\geq\eta.
\end{equation}
This part of the proof has no assumptions on the characteristic $p$.

To finish the proof, we would like to find a cubic polynomial $P$ such that $\|f\omega^P\|_{U^3}$ is large. From this point we would be able to conclude by a single application of the $U^3$-inverse theorem. We expand
\[\|f\omega^P\|_{U^3}^8=\E_{a,b,c,x}\partial_a\partial_b\partial_cf(x)\omega^{\Delta_a\Delta_b\Delta_cP(x)}.\]
Thus the object $\Delta_a\Delta_b\Delta_cP(x)$ naturally appears. We call this object the total derivative of $P$. The total derivative of $P$ clearly does not depend on $x$ so we will just write $\Delta_a\Delta_b\Delta_cP$ from now on. Furthermore, $\Delta_a\Delta_b\Delta_cP$ is symmetric and trilinear in $a,b,c$.

Thus our next step is to massage \cref{eq:GM-intro} to turn the triaffine form $T$ into a symmetric trilinear form. For simplicity of exposition, let us assume that $T$ is a trilinear. (The parts of $T$ that are not trilinear can often be removed by an appropriate application of the Cauchy-Schwarz inequality.) We will first explain the argument in the $p\geq 5$ case.

\medskip

\noindent\textbf{Symmetrization:} We start with an argument based on an idea of Green and Tao. By several applications of the Cauchy-Schwarz inequality, \cref{eq:GM-intro} implies that $T(a,b,c)$ is ``close'' to $T(b,a,c)$, more precisely, we can prove that the map $(a,b,c)\mapsto T(a,b,c)-T(b,a,c)$ has bounded rank, and similarly for all other permutations of $a,b,c$.\footnote{See \cref{sec:preliminaries} for the definition of tensor rank that we use in this paper.}

To conclude the symmetrization step, we define
\begin{equation}
\label{eq:sym-intro}
S(a,b,c)=\frac16\paren{T(a,b,c)+T(a,c,b)+T(b,a,c)+T(b,c,a)+T(c,a,b)+T(c,b,a)}.
\end{equation}
Clearly $S$ is a symmetric trilinear form. Furthermore, by the Green-Tao argument, we know that $\rank(T-S)\ll_\eta 1$.

\medskip

\noindent\textbf{Integration:} We now integrate the symmetric trilinear form $S$ to a cubic polynomial $P$. Namely, define
\begin{equation}
\label{eq:int-intro}
P(x)=\frac16S(x,x,x).
\end{equation}
One can easily check that $\Delta_a\Delta_b\Delta_cP=S(a,b,c)$.

Starting from \cref{eq:GM-intro}, with some involved but not particularly interesting additional inequalities, we can conclude that $\|f\omega^P\|_{U^3}\gg_\eta1$. Applying the $U^3$-inverse theorem lets us complete the proof.

\subsection{Low characteristic}

Now we explain the difficulties that appear in characteristic $p<5$. Fortunately the bulk of the proof, up to \cref{eq:GM-intro}, works in arbitrary characteristic. The Green-Tao argument also goes through, showing that $T$ is close to each of its permutations. However, the remainder of the symmetrization step and the integration step, which were essentially trivial in high characteristic, fail badly when $p<5$. In particular, \cref{eq:sym-intro} and \cref{eq:int-intro} both involve dividing by 6.

For $p\geq 5$ we have the convenient characterization that $S$ satisfies $S(a,b,c)=\Delta_a\Delta_b\Delta_cP$ for some cubic polynomial $P$ if and only if $S$ is a symmetric trilinear form. However, this is no longer the case in low characteristic.

Following Tao and Ziegler, we define a classical symmetric trilinear form to be a trilinear form $S\colon V^3\to\F_p$ that satisfies $S(a,b,c)=\Delta_a\Delta_b\Delta_cP$ for some classical cubic polynomial $P$. As part of their proof of the inverse theorem Tao and Ziegler characterized classical symmetric $k$-linear forms for all $k$. In the same vein, we define a non-classical symmetric trilinear form to be $S\colon V^3\to\F_p$ that satisfies $S(a,b,c)=\Delta_a\Delta_b\Delta_cP$ for some non-classical cubic polynomial $P$. One problem we solve in this paper is the characterization of non-classical symmetric $k$-linear forms for all $k$.

As a special case, we will show that if $S$ is a symmetric trilinear form and also has some further symmetries which will be defined in \cref{sec:ncsm}, then we can integrate $S$ to a non-classical cubic polynomial $P$ (i.e., $\Delta_a\Delta_b\Delta_cP=S(a,b,c)$). We are able to characterize non-classical symmetric $k$-linear forms for all $k$, which solves the integration problem necessary for the $U^{k+1}$-inverse theorem for all $k$.

For the symmetrization problem, we have a trilinear form $T$ which we know is close in rank to all of its permutations. We wish to find, for $p=2$, an non-classical symmetric trilinear form $S$ such that $\rank(T-S)\ll 1$ and, for $p=3$, a classical symmetric trilinear $S$ such that $\rank(T-S)\ll 1$.

Even the problem of finding a symmetric $k$-linear $S$ that is close to $T$ is quite hard and we do not know how to solve it for general $k$. When $k=3$, the hypothesis that $T$ is close to all of its permutations implies that there is a subspace $U\leq V$ of bounded codimension so that $T|_U$ is symmetric. This lets us easily find a symmetric trilinear $S'$ such that $\rank(T-S')\ll 1$. For $p=3$, turning $S'$ into a classical symmetric trilinear form is easy. For $p=2$, we need to turn $S'$ into a non-classical symmetric trilinear form which takes some more work, but we are able to do so.

\medskip

\noindent\textbf{Structure of paper.} We give the necessary preliminaries in \cref{sec:preliminaries}. We solve the integration problem for all $k$ in \cref{sec:ncsm} and the symmetrization problem for $k=3$ in \cref{sec:symmetrization}. We combine these tools to prove our main result in \cref{sec:proof-of-main}. In \cref{sec:concl} we give some conjectures.

\section{Preliminaries}
\label{sec:preliminaries}

We gave a local definition of non-classical polynomials in \cref{sec:intro}. There is an equivalent global definition known. (Also see \cite[Lemma 1.7]{TZ12} for some more basic facts about non-classical polynomials.)

\begin{lemma}[{\cite[Lemma 1.7(iii)]{TZ12}}]
\label{poly-basis}
$P\colon \F_p^n\to\R/\Z$ is a non-classical polynomial of degree at most $k$ if and only if it can be expressed in the form
\[P(x_1,\ldots,x_n)=\alpha+\sum_{\genfrac{}{}{0pt}{}{0\leq i_1,\ldots,i_n<p,\, j\geq 0:}{0<i_1+\cdots+i_n\leq k-j(p-1)}}\frac{c_{i_1,\ldots,i_n,j}|x_1|^{i_1}\cdots|x_n|^{i_n}}{p^{j+1}}\pmod 1,\]
for some $\alpha\in\R/\Z$ and coefficients $c_{i_1,\ldots,i_n,j}\in\{0,\ldots,p-1\}$ where $|\cdot|$ is the standard map $\F_p\to\{0,\ldots,p-1\}$. Furthermore, this representation is unique.
\end{lemma}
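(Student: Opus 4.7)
My plan is to establish the forward direction by a direct computation on the listed monomials, and then to obtain the reverse direction and uniqueness together by induction on $k$.

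For the forward direction, write $M_{I,j}(x) := |x_1|^{i_1}\cdots|x_n|^{i_n}/p^{j+1} \pmod 1$ for a multi-index $I = (i_1,\ldots,i_n)$ with $0 \le i_s < p$ and an integer $j \ge 0$, and define the \emph{weighted degree} $w(I,j) := i_1+\cdots+i_n + j(p-1)$. I would prove by strong induction on $w(I,j)$ that $M_{I,j}$ is a non-classical polynomial of degree at most $w(I,j)$. The computational heart is the carry identity
\[
|a+b| \;=\; |a| + |b| \;-\; p \cdot \mathbbm{1}\bigl[\,|a|+|b| \ge p\,\bigr]
\qquad (a,b \in \F_p),
\]
which holds in $\Z$ when $|\cdot|$ denotes the lift to $\{0,\ldots,p-1\}$. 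Substituting into $(|x+h|)^i$ and expanding via the multinomial theorem decomposes $\Delta_h M_{I,j}$ as a $\Z$-linear combination of terms of the form $|x|^{a}|h|^{b}/p^{j+1}$ (with $a+b=i$ and $b\ge 1$) together with terms involving one or more carry factors; every carry contributes an extra factor of $p$ in the numerator, and each carry indicator is itself classical of degree $p-1$ in $x$ (for fixed $h$), by Fermat's little theorem. Every resulting summand can therefore be rewritten in the form $M_{I',j'}$ with strictly smaller weighted degree, so applying $\Delta_{h_1}\cdots\Delta_{h_{w(I,j)+1}}$ yields $0 \pmod 1$.

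For the reverse direction, I induct on $k$. The base case $k = 0$ says $P$ is constant. For the inductive step, given $P \in \Poly_{\leqslant k}(V\to\R/\Z)$, each directional derivative $\Delta_{e_s}P$ belongs to $\Poly_{\leqslant k-1}(V\to\R/\Z)$ and by the inductive hypothesis admits a representation of the stated form. Using the explicit formula for $\Delta_{e_s} M_{I,j}$ from the forward computation, I match coefficients across the $n$ derivatives to reconstruct candidate coefficients $c_{I,j}$ for $P$; subtracting the corresponding sum of monomials leaves a function all of whose directional derivatives vanish, hence a constant $\alpha$. This simultaneously gives existence and uniqueness, since the reconstruction determines each $c_{I,j}$ uniquely from $P$.

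The main obstacle will be the bookkeeping in the forward calculation: one must track precisely how $\Delta_h$ shifts the multi-index $I$ and the depth $j$, and verify that the carry terms can always be renormalized into the prescribed form with coefficients in $\{0,\ldots,p-1\}$ rather than escaping to a denser denominator. A clean way to manage this is first to handle the single-variable case $n=1$ by hand (where the only nontrivial interaction is between $|x|^i/p^{j+1}$ and the single carry indicator, and one reduces to a univariate linear-independence statement on $\{0,1,\ldots,p-1\}$), and then to promote to general $n$ using the tensor-product structure of the monomials across distinct variables.
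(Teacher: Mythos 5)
The paper does not prove this lemma; it is quoted verbatim from \cite[Lemma 1.7(iii)]{TZ12}, so there is no in-paper argument to compare against. Evaluating your sketch on its own merits: the forward direction is on solid ground. The carry identity $|a+b|=|a|+|b|-p\,\mathbbm{1}[|a|+|b|\geq p]$, the observation that each carry adds a factor of $p$ to the numerator, and the re-expansion of the carry indicator in the powers $1,|x|,\ldots,|x|^{p-1}$ (valid modulo $p^j$ because the Vandermonde determinant $\prod_{0\leq a<b\leq p-1}(b-a)$ is coprime to $p$) do drive down the weighted degree $w(I,j)=\sum_s i_s+j(p-1)$, and the strong induction closes. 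Your plan to do $n=1$ by hand and then tensor across variables is the right way to keep the bookkeeping manageable.

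The genuine gap is in the reverse (existence) direction. ``Match coefficients across the $n$ derivatives to reconstruct candidate coefficients $c_{I,j}$'' is not yet an argument: $\Delta_{e_s}M_{I,j}$ is a $\Z$-linear combination of many lower-weight monomials, including carry terms at smaller depth $j'$, so the map from the vector of $c_{I,j}$'s to the coefficient vectors of $(\Delta_{e_1}P',\ldots,\Delta_{e_n}P')$ is not obviously triangular or invertible, and you have not exhibited the ordering in which the system can be solved. Worse, there is a latent circularity: you want a $P'$ in the span of the listed monomials with $\Delta_{e_s}P'=\Delta_{e_s}P$, but the only reason you believe such $P'$ exists is the very surjectivity you are trying to prove. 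To close this you must either (i) compute $\Delta_{e_s}M_{I,j}$ explicitly enough to identify a ``leading'' term and prove the resulting system is upper-triangular with unit diagonal modulo $p$ in a suitable well-ordering of $(I,j)$, or (ii) replace this step by a dimension count, proving independently that $|\ol{\Poly}_{\leqslant k}(\F_p^n\to\R/\Z)|$ equals $p$ to the number of admissible $(I,j)$; uniqueness would then follow from the linear-independence argument (the Vandermonde computation you invoke, extended to general $n$ by the tensor structure). As written, the reconstruction step is a hope, not a proof, and the ``main obstacle'' you flag is actually located in this reverse direction rather than in the forward bookkeeping.
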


For a tuple $(h_1,\ldots,h_k)\in V^k$ and a subset $I\subseteq[k]$, we use the notation $h_I$ to denote the tuple $(h_i)_{i\in I}\in V^I$.

\begin{defn}
Given a $k$-linear form $T\colon V^k\to\F_p$ we use two different notions for the rank of $T$. 

The \textbf{analytic rank} of $T$, denoted $\arank(T)$, is defined by the equation \[p^{-\arank(T)}=\E_{h_1,\ldots,h_k\in V}\omega^{T(h_1,\ldots,h_k)}.\]

The \textbf{partition rank} of $T$, denote $\prank(T)$, is the length of the shortest decomposition of $T$ as a sum of partition rank 1 forms. We define $T\colon V^k\to\F_p$ to have partition rank 1 if we can write $T(h_1,\ldots,h_k)=R(h_I)S(h_{[k]\setminus I})$ where $R\colon V^I\to\F_p$ and $S\colon V^{[k]\setminus I}\to\F_p$ are multilinear forms and $\emptyset\neq I\subsetneq[k]$.
\end{defn}

These definitions of rank enjoy many useful properties. One which will be important to us is the subadditivity of rank. Obviously $\prank(S+T)\leq\prank(S)+\prank(T)$. For analytic rank, we also have $\arank(S+T)\leq\arank(S)+\arank(T)$, but the proof is more subtle \cite[Theorem 1.5]{Lov19}.

For $k=2$, the partition rank and analytic rank coincide and both are equal to the usual notion of matrix rank. For general $k$ we have the following bounds.

\begin{thm}
\label{thm:analytic-partition}
There are constants $C_{p,k}$ and $D_k$ such that for a $k$-linear form $T\colon V^k\to\F_p$ we have the bounds
\[\arank(T)\leq\prank(T)\leq C_{p,k}\arank(T)^{D_k}.\]
\end{thm}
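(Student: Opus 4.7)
The plan is to handle the two inequalities separately. For the easy direction $\arank(T)\leq\prank(T)$, I would decompose $T=\sum_{i=1}^{\prank(T)}T_i$ into partition-rank-one summands and invoke the subadditivity of analytic rank \cite[Theorem 1.5]{Lov19}, reducing the task to showing $\arank\bigl(R(h_I)S(h_{[k]\setminus I})\bigr)\leq 1$ for a single partition-rank-one form. Since $h_I$ and $h_{[k]\setminus I}$ are independent when $h$ is uniform on $V^k$, and since the bias of a multilinear form is invariant under scaling the form by an element of $\F_p^{\times}$ (absorb the scalar into one of the arguments) and is a nonnegative real (by an easy induction on the arity), one computes
\[
\E_h\omega^{R(h_I)S(h_{[k]\setminus I})}=\Pr[R(h_I)=0]+\Pr[R(h_I)\neq 0]\,p^{-\arank(S)}\geq\Pr[R(h_I)=0].
\]
Conditioning on all but one argument of $R$ reduces $R$ to a linear form on $V$, which is either identically zero or equidistributed; hence $\Pr[R(h_I)=0]\geq 1/p$ and the bias is at least $1/p$, giving $\arank\leq 1$.

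The converse $\prank(T)\leq C_{p,k}\arank(T)^{D_k}$ is the substantive direction and the main obstacle. The plan is to invoke the polynomial equivalence between analytic rank and partition rank as a black box. A qualitative bound $\prank(T)\leq F(\arank(T))$ for some function $F$ follows from a polynomial regularity argument, but with Ackermann-type growth that would be too weak for inverse-theorem applications. Upgrading $F$ to the polynomial dependence stated here, valid for all primes $p$ and arities $k$, has been the subject of a series of more recent works; since the theorem makes no claim on the explicit values of $C_{p,k}$ or $D_k$, the strongest available such polynomial equivalence can be cited directly. The difficulty of extracting a structural decomposition merely from a bias lower bound is what makes this direction deep, and explains why it is pulled from the literature rather than proved in the preliminaries.
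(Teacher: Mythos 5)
Your proposal matches the paper exactly: the paper also states \cref{thm:analytic-partition} without proof, citing \cite{Lov19} for the inequality $\arank(T)\leq\prank(T)$ and \cite{Mil19} and \cite{Jan20} (independently) for the polynomial upper bound, noting that \cite{CM21,AKZ21} improve the upper bound to linear when $k=3$ and $p>2$. Your sketch of the lower bound via subadditivity together with the computation that a partition-rank-one form has bias at least $\Pr[R(h_I)=0]\geq 1/p$ is correct and is in fact Lovett's own argument, which the paper simply cites rather than reproduces.
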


The lower bound is due to Lovett \cite{Lov19} and the upper bound was proved independently by Mili\'cevi\'c \cite{Mil19} and Janzer \cite{Jan20}. Note that a recent result of Cohen and Moshkovitz and independently Adiprasito, Kazhdan, and Ziegler improves the upper bound to linear when $k=3$ and $p>2$ \cite{CM21, AKZ21}.

One simple property of rank that we use multiple times through the argument is the following.

\begin{lemma}
\label{thm:close-in-rank}
Let $T\colon V^k\to\F_p$ be a $k$-linear form and let $U\leq V$ be a subspace such that $T|_U\equiv 0$. Then $\prank(T)\leq k\cdot\codim U$.
\end{lemma}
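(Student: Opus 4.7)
The plan is to produce an explicit decomposition of $T$ into $k\cdot\codim U$ partition-rank-$1$ pieces. First, I would fix a complement $W\leq V$ with $V=U\oplus W$ and choose a basis $e_1,\ldots,e_d$ of $W$, where $d=\codim U$. Each $h\in V$ then has a unique decomposition $h=u(h)+w(h)$ with $u(h)\in U$ and $w(h)=\sum_{j=1}^d c_j(h)\,e_j$, and the maps $u\colon V\to U$ and $c_j\colon V\to\F_p$ are linear.

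The key move is a telescoping identity. Since $T|_U\equiv 0$ and $T$ is multilinear, I can write
\[
T(h_1,\ldots,h_k)=T(h_1,\ldots,h_k)-T(u(h_1),\ldots,u(h_k))=\sum_{i=1}^{k}T\!\left(u(h_1),\ldots,u(h_{i-1}),\,w(h_i),\,h_{i+1},\ldots,h_k\right),
\]
swapping one $h_j$ for $u(h_j)$ at a time and using $h_i-u(h_i)=w(h_i)$. Expanding $w(h_i)=\sum_{j=1}^d c_j(h_i)\,e_j$ and pulling the scalars out by multilinearity gives
\[
T(h_1,\ldots,h_k)=\sum_{i=1}^{k}\sum_{j=1}^{d} c_j(h_i)\cdot T\!\left(u(h_1),\ldots,u(h_{i-1}),\,e_j,\,h_{i+1},\ldots,h_k\right).
\]

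In each summand, the factor $c_j(h_i)$ depends only on the single variable $h_i$, while the remaining factor depends on the variables indexed by $[k]\setminus\{i\}$. Taking $I=\{i\}$ in the definition of partition rank, each summand is a partition-rank-$1$ form, so the identity exhibits $T$ as a sum of $k\cdot d=k\cdot\codim U$ such forms, giving the claimed bound. There is no real obstacle here beyond bookkeeping; the only thing worth verifying carefully is that the telescoped intermediate expressions are genuinely multilinear on $V^k$ (they are, since $u$ and each $c_j$ are linear on $V$), which is what legitimizes splitting off $c_j(h_i)$ as a scalar factor and fixing $e_j$ as a constant in the remaining multilinear form.
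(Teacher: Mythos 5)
Your proof is correct and produces, via the telescoping identity, exactly the same decomposition that the paper constructs by writing $T$ out in a dual basis adapted to $U$ and grouping coefficients by the first index that lands in the complement. The telescoping presentation is a cleaner, coordinate-free way to arrive at the same $k\cdot\codim U$ partition-rank-one pieces.
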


\begin{proof}
Write $n=\dim V$ and $r=\codim U$. Pick linearly independent linear forms $\alpha_1,\ldots,\alpha_n\colon V\to\F_p$ such that $\alpha_i|_U\equiv 0$ for $1\leq i\leq r$. (In other words $\alpha_1,\ldots,\alpha_n$ is a dual basis for $V$ and $\alpha_{r+1},\ldots,\alpha_n$ is a dual basis for $U$.) The tensor $T$ can be expressed in these coordinates as
\[T(x_1,\ldots,x_k)=\sum_{1\leq i_1,\ldots,i_k\leq n}T_{i_1,\ldots,i_k}\alpha_{i_1}(x_1)\cdots\alpha_{i_k}(x_k)\]for some coefficients $T_{i_1,\ldots,i_k}\in \F_p$ satisfying  $T_{i_1,\ldots,i_k}=0$ if $i_1,\ldots,i_k>r$.

Now one can easily group the above expression into the sum of $kr$ terms, each of partition rank 1. Explicitly,
\[T(x_1,\ldots,x_k)=\sum_{j=1}^k\sum_{\ell=1}^r \alpha_\ell(x_j)\beta_{j,\ell}(x_{[k]\setminus j})\]where
\[\beta_{j,\ell}(x_{[k]\setminus j})=\sum_{\genfrac{}{}{0pt}{}{r<i_1,\ldots,i_{j-1}\leq n}{1\leq i_{j+1},\ldots, i_n\leq n}}T_{i_1,\ldots,i_{j-1},\ell,i_{j+1},\ldots,i_k}\alpha_{i_1}(x_1),\ldots,\alpha_{i_{j-1}}(x_{j-1})\alpha_{i_{j+1}}(x_{j+1})\cdots\alpha_{i_k}(x_k).\qedhere\]
\end{proof}

\section{Non-classical symmetric multilinear forms}
\label{sec:ncsm}

In this section we define non-classical symmetric multilinear forms ($\nCSM$ for short) and classical symmetric multilinear forms ($\CSM$ for short) as multilinear forms with additional symmetry properties. Then we will show that these objects are exactly the total derivatives of non-classical polynomials and classical polynomials respectively.

\begin{defn}
A \textbf{non-classical symmetric multilinear form} is a map $T\colon V^k\to\F_p$ that is:
\begin{itemize}
    \item (multilinear) fixing all the variables but one, the map $h_i\mapsto T(h_1,\ldots,h_k)$ is a linear map;
    \item (symmetric) $T(h_1,\ldots,h_k)$ is invariant under permutations of the $k$ variables $h_1,\ldots, h_k$;
    \item (non-classical) $T(h_1,\ldots, h_1,h_2,\ldots, h_{k-p+1})=T(h_1,h_2,\ldots,h_2,h_3,\ldots, h_{k-p+1})$ where the variable $h_1$ appears $p$ times in the first expression and $h_2$ appears $p$ times in the second expression.
\end{itemize}
We write $\nCSM^k(V)$ for the space of non-classical symmetric multilinear forms $T\colon V^k\to \F_p$.

A \textbf{classical symmetric multilinear form} additionally satisfies:
\begin{itemize}
    \item (classical) $T(h_1,\ldots, h_1,h_2,\ldots, h_{k-p+1})=0$ where the variable $h_1$ appears $p$ times in the left-hand side.
\end{itemize}
We write $\CSM^k(V)$ for the space of classical symmetric multilinear forms $T\colon V^k\to \F_p$.
\end{defn}

Note that the classical condition is stronger than the non-classical condition, so $\CSM^k(V)\subseteq\nCSM^k(V)$. For $k\leq p$ the non-classical condition is vacuous while for $k<p$ the classical condition is vacuous. In their proof of the inverse theorem Tao and Ziegler introduced classical symmetric multilinear forms and (implicitly) non-classical symmetric multilinear forms \cite{TZ12}.

\begin{defn}
For $P\in\Poly_{\leqslant k}(V\to\R/\Z)$, define the \textbf{total derivative} of $P$ to be the map $d^kP\colon V^k\to\F_p$ defined by \[d^kP(h_1,\ldots, h_k)=(\Delta_{h_1}\cdots \Delta_{h_k}P)(0).\] The right-hand side of this equation lies in $\{0,1/p,\ldots,(p-1)/p\}\subset\R/\Z$ which we identify with $\F_p$; furthermore $d^kP(h_1,\ldots, h_k)=(\Delta_{h_1}\cdots \Delta_{h_k}P)(x)$ for all $h_1,\ldots, h_k,x\in V$.\footnote{One way to see these facts is to notice that $\Delta_{h_2}\cdots\Delta_{h_k}P\in\Poly_{\leqslant 1}(V\to\R/\Z)$. By \cref{poly-basis} this expression is of the form $\alpha+c_1|x_1|/p+\cdots+c_n|x_n|/p \pmod1$. Applying $\Delta_{h_1}$, we see that $\Delta_{h_1}\cdots \Delta_{h_k}P(x)=(h_{1,1}c_1+\cdots+h_{1,n}c_n)/p$ is independent of $x$ and lies in the set $\{0,1/p,\ldots,(p-1)/p\}$.}
\end{defn}

Tao and Ziegler prove that the total derivatives of classical polynomials are exactly CSMs.

\begin{prop}[{\cite[Lemma 4.5]{TZ12}}]
\label{thm:integrate-csm}
For $k\geq 1$ and a classical polynomial $P\in\Poly_{\leqslant k}(V\to\F_p)$, the total derivative $d^kP$ lies in $\CSM^k(V)$. Furthermore this map is surjective. In other words, we have the short exact sequence
\[0\to \Poly_{\leqslant k-1}(V\to\F_p)\to\Poly_{\leqslant k}(V\to\F_p)\xrightarrow{d^k}\CSM^k(V)\to0.\]
\end{prop}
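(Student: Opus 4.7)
The proposition combines three claims: (i) $d^kP\in\CSM^k(V)$ for every classical $P$ of degree at most $k$; (ii) $\ker d^k = \Poly_{\leqslant k-1}(V\to\F_p)$; and (iii) $d^k$ surjects onto $\CSM^k(V)$. The first two claims are formal; the content is in (iii).

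For (i), I would check each of the four defining properties in turn. Multilinearity of $d^kP$ in each argument is precisely the content of the footnote following the definition of $d^k$: the function $\Delta_{h_2}\cdots\Delta_{h_k}P$ is a classical polynomial of degree at most $1$, hence of the form $\alpha+\sum_i c_i|x_i|/p$ modulo $1$, and applying $\Delta_{h_1}$ gives $\sum_i c_i h_{1,i}/p$, which is $\F_p$-linear in $h_1$. Symmetry is immediate from the commutativity of the operators $\Delta_h$. For the classical condition I use the standard identity: for any $f\colon V\to\F_p$,
\[
\Delta_h^pf(x)=\sum_{j=0}^{p}\binom{p}{j}(-1)^{p-j}f(x+jh)\equiv f(x+ph)-f(x)=0\pmod p,
\]
where the middle step uses $\binom{p}{j}\equiv0\pmod p$ for $0<j<p$ and the final step uses $ph=0$ in the $\F_p$-vector space $V$. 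Applying this with $f=\Delta_{h_2}\cdots\Delta_{h_{k-p+1}}P$ (which is classical, hence $\F_p$-valued) gives the classical condition. For (ii), the constancy-in-$x$ claim of the footnote upgrades $d^kP=0$ to $\Delta_{h_1}\cdots\Delta_{h_k}P\equiv0$ identically, forcing $P$ to have non-classical degree at most $k-1$, and since $P$ is classical this lands in $\Poly_{\leqslant k-1}(V\to\F_p)$.

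The bulk of the work is in (iii). Fix a basis $e_1,\ldots,e_n$ of $V$. By multilinearity and symmetry, $T\in\CSM^k(V)$ is determined by the scalars $T_a:=T(e_1,\ldots,e_1,\ldots,e_n,\ldots,e_n)$ with $e_i$ repeated $a_i$ times, indexed by $a=(a_1,\ldots,a_n)$ with $\sum_i a_i=k$, and the classical condition forces $T_a=0$ whenever some $a_i\geq p$. I then propose
\[
P(x_1,\ldots,x_n)=\sum_{\substack{0\leq a_i<p\\ \sum_i a_i=k}}\frac{T_a}{a_1!\cdots a_n!}\,x_1^{a_1}\cdots x_n^{a_n},
\]
which lies in $\Poly_{\leqslant k}(V\to\F_p)$ because each $a_i!$ is a unit in $\F_p$ when $a_i<p$. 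A direct polarization computation, expanding $(t_1h_1+\cdots+t_kh_k)^a$ and reading off the coefficient of $t_1\cdots t_k$, shows that $d^k(x_1^{a_1}\cdots x_n^{a_n})(e_{\ell_1},\ldots,e_{\ell_k})$ equals $\prod_i a_i!$ when the multiset $\{\ell_1,\ldots,\ell_k\}$ has each index $i$ appearing exactly $a_i$ times, and vanishes otherwise. Summing gives $d^kP(e_{\ell_1},\ldots,e_{\ell_k})=T(e_{\ell_1},\ldots,e_{\ell_k})$, and extending by multilinearity and symmetry yields $d^kP=T$. The only delicate point in the whole argument is the division by the multinomial denominators $a_1!\cdots a_n!$, which is exactly why the classical hypothesis (forcing $a_i<p$) is essential; without it one would need monomials of the form $|x|^a/p^j$ provided by \cref{poly-basis} to integrate exponents $a_i\geq p$, which is precisely the $\nCSM$ integration problem treated in \cref{sec:ncsm}.
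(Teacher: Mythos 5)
Your proof is correct. Note, however, that the paper does not actually prove this proposition itself: it is stated with the citation \cite[Lemma 4.5]{TZ12} and treated as an imported black box. So there is no "paper's proof" to compare against line-by-line; what you have produced is a self-contained proof, and it is (to my recollection) essentially the Tao--Ziegler argument. Parts (i) and (ii) are routine and handled correctly --- multilinearity via the footnote's degree-$1$ observation, symmetry via commutativity of the $\Delta_h$, the classical property via $\Delta_h^p\equiv0$ (which, by $\binom{p}{j}\equiv 0$ for $0<j<p$ and $ph=0$, reduces to $(-1)^pf(x)+f(x+ph)\equiv0\pmod p$), and the kernel computation from the constancy-in-$x$ of the $k$th derivative. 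The substance is in (iii), where your explicit inversion formula
\[
P(x)=\sum_{\substack{0\leq a_i<p\\ \sum_i a_i=k}}\frac{T_a}{a_1!\cdots a_n!}\,x_1^{a_1}\cdots x_n^{a_n}
\]
is valid precisely because the classical condition forces $T_a=0$ when some $a_i\geq p$, so the multinomial denominators are units in $\F_p$; the polarization identity $d^k(x^a)(e_{\ell_1},\ldots,e_{\ell_k})=\prod_i a_i!\cdot\mathbbm1[\text{multiset of }\ell\text{ matches }a]$ checks out (e.g.\ via iterated one-variable differences $\Delta^{a_i}(x_i^{a_i})=a_i!$). Worth highlighting is the contrast with the paper's own proof of the nCSM analogue, \cref{thm:integrate-ncsm}: there the argument is non-constructive --- one identifies $\ker d^k=\Poly_{\leqslant k-1}$ and then matches cardinalities using the explicit monomial basis of \cref{poly-basis}. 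That indirection is necessary because no clean inversion formula is available in the non-classical case (integrating exponents $a_i\geq p$ forces denominators $p^{j+1}$, i.e.\ genuinely non-classical monomials), and your closing sentence correctly identifies this as the obstruction. The two approaches thus complement each other: yours is more transparent and gives a formula, while the paper's dimension count is what generalizes to $\nCSM^k(V)$.
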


They also show that the total derivative of a non-classical polynomial is an nCSM.

\begin{prop}[{\cite[Eq. (4.1)]{TZ12}}]
For $P\in\Poly_{\leqslant k}(V\to\R/\Z)$, the total derivative $d^kP$ lies in $\nCSM^k(V)$.
\end{prop}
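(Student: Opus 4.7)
The plan is to verify the three defining properties of $\nCSM^k(V)$---multilinearity, symmetry, and the non-classical condition---for $T := d^kP$. The first two are quick and I would dispatch them first; the non-classical condition is the only step that uses anything beyond basic manipulations.

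For multilinearity in the first slot, I would use the telescoping identity $\Delta_{h_1+h_1'}P(x) = \Delta_{h_1'}P(x) + \Delta_{h_1}P(x+h_1')$ obtained by inserting $P(x+h_1')$ into $P(x+h_1+h_1')-P(x)$. Applying $\Delta_{h_2}\cdots\Delta_{h_k}$ to both sides and using the base-point independence of $d^kP$ recorded in the footnote (which lets us discard the spurious shift by $h_1'$ when evaluating at $x=0$), this gives $d^kP(h_1+h_1',h_2,\ldots,h_k) = d^kP(h_1,h_2,\ldots,h_k) + d^kP(h_1',h_2,\ldots,h_k)$. For symmetry, the identity $\Delta_a\Delta_bP(x) = P(x+a+b)-P(x+a)-P(x+b)+P(x)$ is manifestly symmetric in $a,b$, so the operators $\Delta_h$ pairwise commute and $d^kP$ is invariant under permutations of its arguments; linearity in the remaining slots then follows from symmetry plus the first-slot case.

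The substantive step is the non-classical condition, which (after using symmetry and commutativity of the $\Delta_h$) unwinds to the claim $\Delta_a^p\Delta_b R(0) = \Delta_a\Delta_b^p R(0)$, where $R := \Delta_{h_3}\cdots\Delta_{h_{k-p+1}}P$ is a non-classical polynomial of degree at most $p+1$ (applying $k-p-1$ derivatives to a degree-$k$ non-classical polynomial drops the degree by that many, as follows directly from the defining identity $\Delta_{g_1}\cdots\Delta_{g_{k+1}}P\equiv 0$). By $\F_p$-linearity of $R \mapsto \Delta_a^p\Delta_bR(0)-\Delta_a\Delta_b^pR(0)$ and the canonical basis of Lemma~\ref{poly-basis}, I would then reduce to checking the identity when $R$ is a single basis monomial $|x_1|^{i_1}\cdots|x_n|^{i_n}/p^{j+1}$ with $\sum_s i_s + j(p-1) \leq p+1$. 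On such a monomial the identity can be verified by a direct computation, using $\Delta_a^p = -\sum_{i=1}^{p-1}\binom{p}{i}\Delta_a^i$ (obtained by writing the shift $f\mapsto f(\cdot+a)$ as $\id+\Delta_a$, noting that its $p$-th power is $\id$ since $pa=0$ in $V$, and comparing binomial coefficients) together with the congruence $|x+h|\equiv|x|+|h|\pmod p$ and the resulting ``carry'' correction at the $1/p^{j+1}$ scale. I expect this final monomial bookkeeping to be the main obstacle: it is the one place where the non-classical phenomena genuinely enter, via the interaction between $\binom{p}{i}\equiv 0\pmod p$ and the sub-$1/p$ coefficient denominators $1/p^{j+1}$; all other steps are purely formal.
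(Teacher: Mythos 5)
The paper does not give its own proof of this proposition: it is imported verbatim by citation from Tao--Ziegler \cite[Eq.\ (4.1)]{TZ12}, so there is nothing in the text to compare your argument against. Evaluating your proposal on its own terms, the structure is sound and the ingredients you identify are the right ones. The telescoping identity plus base-point independence gives additivity (and over $\F_p$ additivity already gives $\F_p$-homogeneity, so this does establish multilinearity); commutativity of the difference operators gives symmetry; and your reduction of the non-classical condition to the statement $\Delta_a^p\Delta_b R(0)=\Delta_a\Delta_b^p R(0)$ for a non-classical $R$ of degree at most $p+1$ is exactly correct, including the degree count. The operator identity $\Delta_a^p=-\sum_{i=1}^{p-1}\binom{p}{i}\Delta_a^i$ is the right tool, and you have correctly located where the non-classical phenomenon enters: $p\mid\binom{p}{i}$ interacting with the denominators $p^{j+1}$.

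The one place you stop short is the monomial verification itself, which you flag as ``the main obstacle'' without carrying it out. It does go through, and here is a way to see that it is genuinely routine once you isolate the right auxiliary fact. From \cref{poly-basis}, multiplication by $p$ sends a non-classical polynomial of degree $\leq d$ to one of degree $\leq\max(0,d-(p-1))$: the $j=0$ monomials become integers and vanish mod $1$, and each remaining monomial has its $j$ index lowered by one, shifting the degree bound down by $p-1$. Using the operator identity and pairing off the $i=1$ terms on both sides (which agree since $\Delta_a\Delta_b=\Delta_b\Delta_a$), one is left to show that for $2\leq i\leq p-1$ the contributions $\binom{p}{i}\Delta_a^i\Delta_b R(0)$ and $\binom{p}{i}\Delta_a\Delta_b^i R(0)$ agree, and here the factor of $p$ in $\binom{p}{i}$ pushes the relevant $(i+1)$-st derivatives (of degree $\leq p-i$) below depth, which is where the monomial bookkeeping you describe lands. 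So your plan is correct; spelling out this degree-drop lemma would let you bypass most of the carry-correction arithmetic and reduce the remaining check to a short computation rather than a case analysis over monomials.
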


Our integration result gives the implication in the opposite implication.

\begin{prop}[Integrating $\nCSM$s]
\label{thm:integrate-ncsm}
For $k\geq 1$ and $T\in\nCSM^k(V)$, there exists a non-classical polynomial $P\in\Poly_{\leqslant k}(V\to\R/\Z)$ such that $d^kP=T$. In other words, we have the short exact sequence
\[0\to \Poly_{\leqslant k-1}(V\to\R/\Z)\to\Poly_{\leqslant k}(V\to\R/\Z)\xrightarrow{d^k}\nCSM^k(V)\to0.\]
\end{prop}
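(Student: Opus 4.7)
The plan is to prove surjectivity of $d^k$ by induction on $k$, with \cref{thm:integrate-csm} (Tao and Ziegler's classical result) handling the classical part. First observe that the kernel of $d^k\colon\Poly_{\leqslant k}(V\to\R/\Z)\to\nCSM^k(V)$ is exactly $\Poly_{\leqslant k-1}(V\to\R/\Z)$, directly from the local definition of degree. Hence $d^k$ descends to an injective linear map on the quotient, and the exactness claim reduces to surjectivity onto $\nCSM^k(V)$.

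For the induction, the base cases $k\leq p-1$ are immediate since $\nCSM^k = \CSM^k$ there, and \cref{thm:integrate-csm} applies. For $k\geq p$, given $T\in\nCSM^k(V)$, I define the \emph{descent} $S\colon V^{k-p+1}\to\F_p$ by
\[
S(u,v_2,\ldots,v_{k-p+1}) = T(u,\ldots,u,v_2,\ldots,v_{k-p+1}),
\]
where $u$ is repeated $p$ times. I would then verify that $S\in\nCSM^{k-p+1}(V)$: $\F_p$-linearity in $u$ uses $\lambda^p=\lambda$ in $\F_p$ and $p\mid\binom{p}{i}$ for $0<i<p$; symmetry between $u$ and each $v_i$ is exactly the non-classical condition of $T$; and the non-classical condition for $S$ itself (relevant when $k\geq 2p$) follows from a two-fold application of the non-classical condition of $T$, producing the chain $T(h_1^{2p-1},h_2,h_3,\ldots)=T(h_1^p,h_2^p,h_3,\ldots)=T(h_1,h_2^{2p-1},h_3,\ldots)$. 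By the inductive hypothesis, find $Q\in\Poly_{\leqslant k-p+1}(V\to\R/\Z)$ with $d^{k-p+1}Q=S$.

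Write $Q$ in the basis of \cref{poly-basis} and define the depth-shifted and negated polynomial
\[
P_1 = -\sum_{\vec a,j}c_{\vec a,j}\frac{|x_1|^{a_1}\cdots|x_n|^{a_n}}{p^{j+2}},
\]
which has degree at most $k$ (incrementing $j$ by one raises the degree contribution of each monomial by exactly $p-1$). The central computation is that $d^kP_1$ agrees with $T$ on inputs with the first variable repeated $p$ times. Since $pu=0$ in $V$ and $\binom{p}{i}$ is divisible by $p$ for $0<i<p$, the iterated difference operator $\Delta_u^p$ equals $p\cdot A_u$ for some integer-coefficient combination of translates $A_u$, so $\Delta_u^pP_1=A_u(pP_1)=-A_uQ$ in $\R/\Z$. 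Because $\Delta_{v_2}\cdots\Delta_{v_{k-p+1}}Q$ has degree at most $1$ and hence is affine in its basepoint, evaluating $A_u$ on it---using the identity $\sum_{i=1}^{p-1}i\binom{p}{i}(-1)^{p-i}=-p$---gives $d^kP_1(u^p,v_2,\ldots,v_{k-p+1})=T(u^p,v_2,\ldots,v_{k-p+1})$. Thus $T-d^kP_1\in\CSM^k(V)$, and \cref{thm:integrate-csm} supplies a classical $P_2\in\Poly_{\leqslant k}(V\to\F_p)$ with $d^kP_2=T-d^kP_1$; then $P=P_1+P_2$ is the desired integration.

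The most delicate step is verifying that $S\in\nCSM^{k-p+1}(V)$, particularly the induced non-classical condition, which requires the two-fold algebraic manipulation with the non-classical identity of $T$ sketched above. The depth-shift computation is also intricate but essentially reduces to the combinatorial identity quoted.
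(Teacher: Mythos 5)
Your proof is correct, but it takes a genuinely different route from the paper's. The paper's argument is a dimension count: after observing that $\ker(d^k)=\Poly_{\leqslant k-1}(V\to\R/\Z)$, it parameterizes $\ol{\Poly}_{\leqslant k}(V\to\R/\Z)$ via the monomial basis of \cref{poly-basis} and parameterizes $\nCSM^k(V)$ by the reduced exponent tuples $i'(\bm j)$, then checks that the two counts agree, forcing the injection on quotients to be a bijection. Your proof instead constructs a primitive of $T$ by strong induction on $k$, via the descent $S(u,v_2,\ldots)=T(u^p,v_2,\ldots)$, the depth-shifted polynomial $P_1$, and an appeal to \cref{thm:integrate-csm} for the classical remainder. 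The verifications you flag as delicate all check out: the descent lands in $\nCSM^{k-p+1}(V)$ (additivity in $u$ from $p\mid\binom{p}{i}$, the cross-symmetry exactly from the non-classical condition of $T$, and the induced non-classical condition from one application of $T$'s non-classical condition to pass from $T(u^{2p-1},v_2,\ldots)$ to $T(u^p,v_2^p,\ldots)$); since $pu=0$ one has $\Delta_u^p=p A_u$ with $A_u$ annihilating constants; the identity $\sum_{i=1}^{p-1}i\binom{p}{i}(-1)^{p-i}=-p$ gives $A_uR(0)=-d^1R(u)$ for any $R$ of degree at most $1$; and therefore $d^kP_1(u^p,v_2,\ldots)=d^{k-p+1}Q(u,v_2,\ldots)=S(u,v_2,\ldots)=T(u^p,v_2,\ldots)$, so $T-d^kP_1\in\CSM^k(V)$ as claimed. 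The trade-off is the usual one: the paper's count is shorter and treats all $k$ uniformly but is purely existential, while your induction is longer but produces an explicit primitive and makes visible how each increment of non-classical depth ($j\mapsto j+1$ in the basis of \cref{poly-basis}) is forced by the descent structure of $\nCSM$s.
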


\begin{proof}
First note that $\ker(d^k)=\Poly_{\leqslant k-1}(V\to\R/\Z)$ since $\Delta_{h_1}\cdots \Delta_{h_k}P$ is identically zero for all $h_1,\ldots, h_k$ if and only if $P$ is a non-classical polynomial of degree at most $k-1$.

We show that $d^k$ is surjective in an inexplicit manner. First define $\ol{\Poly}_{\leqslant k}(V\to\R/\Z)$ to be the quotient of $\Poly_{\leqslant k}(V\to\R/\Z)$ under the equivalence relation $P\sim P+\alpha$ for $\alpha\in\R/\Z$. Note that for $k\geq 1$, the total derivative $d^k$ acts on this quotient space. We will prove that \begin{equation}
\label{integrate-counting}
\frac{|\ol{\Poly}_{\leqslant k}(V\to\R/\Z)|}{|\ol{\Poly}_{\leqslant k-1}(V\to\R/\Z)|}=|\nCSM^k(V)|.
\end{equation}
This suffices to complete the proof.

Choose an isomorphism $V\cong\F_p^n$. By \cref{poly-basis}, we see that $|\ol{\Poly}_{\leqslant k}(V\to\R/\Z)|=p^{C'_k}$ where $C'_k$ is defined to be the number of tuples $(i_1,\ldots,i_n,j)$ where $i_1,\ldots, i_n\in\{0,\ldots, p-1\}$ and $j\geq 0$ satisfy $0<i_1+\cdots+i_n\leq k-j(p-1)$. Then the left-hand side of \cref{integrate-counting} is $p^{C'_k-C'_{k-1}}$. Define $C_k=C'_k-C'_{k-1}$. We can easily see that $C_k$ is the number of tuples $(i_1,\ldots,i_n,j)$ where $i_1,\ldots, i_n\in\{0,\ldots, p-1\}$ and $0\leq j\leq (k-1)/(p-1)$ satisfy $i_1+\cdots+i_n= k-j(p-1)$. 

Now we compute $|\nCSM^k(V)|$. Clearly any multilinear form $T\colon V^k\to\F_p$ can be expressed uniquely as \[T(x_1,\ldots, x_k)=\sum_{j_1,\ldots, j_k\in[n]}c_{j_1,\ldots, j_k}x_{1,j_1}\cdots x_{k,j_k}\]where $c_{j_1,\ldots,j_k}\in\F_p$. For a tuple $\bm j=(j_1,\ldots, j_k)$, define $i(\bm j)=(i_1,\ldots, i_n)$ such that $i_\ell$ is the number of $j_1,\ldots, j_k$ which are equal to $\ell$. With this notation, we can say that a multilinear form $T$ is symmetric if and only if $c_{\bm j_1}=c_{\bm j_2}$ whenever $i(\bm j_1)=i(\bm j_2)$.

Finally define $i'(\bm j)=(i'_1,\ldots, i'_n)$ where $i'_\ell=0$ if $i_\ell=0$ and otherwise $i'_\ell\in\{1,\ldots, p-1\}$ satisfies $i'_{\ell}\equiv i_{\ell}\pmod{p-1}$. We can see that a multilinear form $T$ is an $\nCSM$ if and only if $c_{\bm j_1}=c_{\bm j_2}$ whenever $i'(\bm j_1)=i'(\bm j_2)$. Thus $|\nCSM^k(V)|=p^{D_k}$ where $D_k$ is the number of tuples $\bm i'\in\{0,\ldots,p-1\}^n$ such that there exists $\bm j\in[n]^k$ such that $\bm i'=i'(\bm j)$. Clearly the only constraint on such a tuple $\bm i'\in\{0,\ldots,p-1\}^n$ is that $s=i'_1+\cdots +i'_n$ satisfies $0<s\leq n$ and $s\equiv n\pmod{p-1}$. Thus we see that $C_k=D_k$, proving the desired result.
\end{proof}

\section{Symmetrization in low characteristic}
\label{sec:symmetrization}

For a $k$-linear form $T\colon V^k\to \F_p$ and a permutation $\pi\in\mathfrak S_k$, write $T_\pi\colon V^k\to\F_p$ for \[T_\pi(x_1,\ldots, x_k)=T(x_{\pi(1)},\ldots,x_{\pi(k)}).\]

We have one tool that lets us show that forms are close to symmetric. The following result is based on an idea of Green and Tao.

\begin{lemma}[{cf. \cite[Lemma 2.8]{Gre07}}]
\label{thm:gt-sym}
Let $A\colon V^2\to\F_p$ be a bilinear form. Let $b_1,b_2,b_3\colon V\to\C$ be arbitrary 1-bounded functions. If\[\abs{\E_{u,v\in V}b_1(u)b_2(v)b_3(u+v)\omega^{A(u,v)}}\geq\delta,\]then\[p^{-\arank(A-A_{(12)})}=\E_{u,v}\omega^{A(u,v)-A(v,u)}\geq\delta^8.\]
\end{lemma}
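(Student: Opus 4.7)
The plan is to prove the inequality by three successive applications of Cauchy--Schwarz, each eliminating one of the $1$-bounded functions $b_1,b_2,b_3$ and doubling the exponent of $\delta$, so that after three rounds we arrive at a sum with no $b_i$'s and the antisymmetric phase $\omega^{A(u,v)-A(v,u)}$ on the right. I expect the last Cauchy--Schwarz, which has to exploit the symmetry of a Gowers box--derivative, to be the main obstacle.

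First I would perform the change of variable $w=u+v$ and absorb $\omega^{-A(u,u)}$ into $b_1$, setting $c_1(u)=b_1(u)\omega^{-A(u,u)}$, so the hypothesis becomes $|\E_{u,w} c_1(u)b_2(w-u)b_3(w)\omega^{A(u,w)}|\geq\delta$. A Cauchy--Schwarz in $u$ (using $|c_1|\leq 1$) eliminates $c_1$; introducing a shift variable $h$ and a further change of variable, the result factors as $\delta^2\leq \E_h F_2(h) F_3(h)$, where
\[F_2(h)=\E_v b_2(v)\overline{b_2(v+h)}\omega^{A(v,h)},\qquad F_3(h)=\E_x b_3(x)\overline{b_3(x+h)}\omega^{-A(x,h)}\]
are each $1$-bounded.

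Next, a Cauchy--Schwarz in $h$ gives $\delta^4\leq(\E_h|F_2(h)|^2)(\E_h|F_3(h)|^2)$. Since each factor is at most $1$, each is individually at least $\delta^4$. Focusing on $\E_h|F_2(h)|^2$, expanding the square and introducing a second shift $k$, I would arrive at
\[\delta^4\leq \E_{h,k}\,\omega^{-A(k,h)}K(h,k),\qquad K(h,k)=\E_v b_2(v)\overline{b_2(v+h)b_2(v+k)}b_2(v+h+k).\]
The function $K$ is $1$-bounded, and the key structural fact (verified by the substitution $v\mapsto v+h-k$ followed by relabeling) is that $K(h,k)=K(k,h)$.

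The third and most delicate step is to pass from this to $\E_{u,v}\omega^{A(u,v)-A(v,u)}$. Using the symmetry of $K$ to rewrite $\E_{h,k}\omega^{-A(k,h)}K(h,k)=\E_{h,k}\omega^{-A(h,k)}K(h,k)$, one final Cauchy--Schwarz in $k$ gives $\delta^8\leq \E_{h,k,\ell}\omega^{A(\ell,h)}K(h,k)\overline{K(h,k+\ell)}$. At this point the Plancherel identity $K(h,k)=\sum_{\chi}|\widehat{\partial_h b_2}(\chi)|^2\,\omega^{\chi(k)}$ collapses the $k$-average, and the $\ell$- and $h$-averages combine to force the constraint encoded by $\omega^{A(u,v)-A(v,u)}$. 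The main obstacle is arranging this last step so that the auxiliary $1$-bounded $K$-factor is absorbed without losing the antisymmetric structure of $A$; the symmetry $K(h,k)=K(k,h)$ is what makes the symmetric part of $A$ cancel and leaves only $A-A^{T}$, yielding $\delta^8\leq p^{-\arank(A-A_{(12)})}$ as required.
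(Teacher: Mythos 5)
Your first two Cauchy--Schwarz steps are correct, and the identity $\E_h|F_2(h)|^2=\E_{h,k}\omega^{-A(k,h)}K(h,k)$ with $K$ symmetric and $1$-bounded checks out. But the argument breaks down precisely where you flag the ``main obstacle,'' and I don't think it can be repaired along these lines. If you actually carry out the Plancherel collapse in
\[\delta^8\leq \E_{h,k,\ell}\omega^{A(\ell,h)}K(h,k)\ol{K(h,k+\ell)},\]
writing $K(h,k)=\sum_\chi c_\chi(h)\omega^{\chi(k)}$ with $c_\chi(h)=|\widehat{\partial_h b_2}(\chi)|^2\geq 0$, the $k$-average gives $\sum_\chi c_\chi(h)^2\omega^{-\chi(\ell)}$ and the $\ell$-average then forces $\chi=A(\cdot,h)$, leaving
\[\delta^8\leq \E_h\, c_{A(\cdot,h)}(h)^2 = \E_h\abs{\E_v \partial_h b_2(v)\,\omega^{-A(v,h)}}^4.\]
At this point there are no averages left to ``combine''; you have a plain inequality involving $A(v,h)$ in one orientation only, with $A(h,v)$ appearing nowhere. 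The symmetry $K(h,k)=K(k,h)$, which you correctly identify as the leverage needed to make the symmetric part of $A$ cancel, was in fact never used in the computation --- applying it only relabels the double average without changing the integrand. So the antisymmetric combination $A-A_{(12)}$ has no mechanism to appear, and the chain of reasoning stalls.

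The underlying problem is the change of variables. The paper's proof does not pass through $w=u+v$; instead, after the first Cauchy--Schwarz (in the $b_2$ variable), it substitutes $u''=u+u'+v$, which is what makes the cross-term $A(u',u)-A(u,u')=-(A-A_{(12)})(u,u')$ drop out explicitly as a factor in the phase, with the remaining phase terms separating into a function of $(u,u'')$ times a function of $(u',u'')$. Averaging out $u''$ then leaves $\abs{\E_{u,u'}b_1'(u)b_2'(u')\omega^{B(u,u')}}\geq\delta^2$ with $B=A-A_{(12)}$, and two further Cauchy--Schwarz applications (one in each variable) give $\delta^8\leq\E\omega^{B}$. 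Your route also burns one extra Cauchy--Schwarz to isolate $\E_h|F_2|^2$, which would put you at $\delta^{16}$ even if a paper-style change of variables were introduced at that stage. I'd suggest abandoning the Plancherel idea and instead looking for the substitution that swaps the arguments of $A$ directly at the $\delta^2$ stage.
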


\begin{proof}
By an application of the Cauchy-Schwarz inequality and the 1-boundedness of $b_2$, we have
\[\delta^2\leq\E_{u,u',v\in V}b_1(u)\ol{b_1(u')}b_3(u+v)\ol{b_3(u'+v)}\omega^{A(u-u',v)}.\]

We make the change of variables $u''=u+u'+v$. Rearranging, we obtain
\begin{align*}
\delta^2
&\leq\E_{u,u',u''}b_1(u)\ol{b_1(u')}b_3(u''-u')\ol{b_3(u''-u)}\omega^{A(u-u',u''-u-u')}\\
&=\E_{u,u',u''}\paren{b_1(u)\ol{b_3(u''-u)}\omega^{A(u,u''-u)}}\paren{\ol{b_1(u')}b_3(u''-u')\omega^{A(-u',u''-u')}}\omega^{A(u',u)-A(u,u')}.
\end{align*}

Write $B=A-A_{(12)}$. By first averaging over $u''$ and then choosing $b_1',b_2'\colon V\to\C$ appropriate 1-bounded functions, the above inequality becomes
\[\delta^2\leq\abs{\E_{u,u'}b_1'(u)b_2'(u')\omega^{B(u,u')}}.\]

By two more applications of the Cauchy-Schwarz inequality, we conclude
\[\delta^8\leq\E_{u,v,u',v'}\omega^{B(u-v,u'-v')}=\E_{u,u'}\omega^{B(u,u')}=p^{-\arank(B)}.\qedhere\]
\end{proof}

\begin{cor}
\label{thm:gt-symmetry-cor}
Let $T\colon V^3\to\F_p$ be a trilinear form and let $b_1,\ldots,b_7\colon V\to\C$ be 1-bounded functions such that
\[\abs{\E_{x,y,z\in V}b_1(x)b_2(y)b_3(z)b_4(x+y)b_5(x+z)b_6(y+z)b_7(x+y+z)(-1)^{T(x,y,z)}}\geq\delta.\]
Then we have $\arank(T-T_\pi)\leq16\log_p(1/\delta)$ for all $\pi\in\mathfrak S_3$.
\end{cor}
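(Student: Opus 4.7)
The plan is to reduce the trilinear problem to the bilinear \cref{thm:gt-sym} by conditioning on one coordinate. First observe that the product
$$b_1(x)b_2(y)b_3(z)b_4(x+y)b_5(x+z)b_6(y+z)b_7(x+y+z)$$
is invariant, up to a relabeling of the $b_i$'s, under any permutation of $(x,y,z)$. Consequently it suffices to establish the bound for a single transposition, say $\pi=(2\,3)$; the general case will then follow by combining transpositions.

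For the transposition $(2\,3)$, for each fixed $x\in V$ the map $A_x(y,z):=T(x,y,z)$ is bilinear, and the functions $c_1^x(y):=b_2(y)b_4(x+y)$, $c_2^x(z):=b_3(z)b_5(x+z)$, and $c_3^x(w):=b_6(w)b_7(x+w)$ are 1-bounded. Setting
$$g(x):=\E_{y,z\in V}c_1^x(y)c_2^x(z)c_3^x(y+z)\omega^{A_x(y,z)},$$
the hypothesis reads $|\E_x b_1(x)g(x)|\geq\delta$, whence $\E_x|g(x)|\geq\delta$ by the $1$-boundedness of $b_1$. Applying \cref{thm:gt-sym} to the bilinear form $A_x$ with the $1$-bounded functions $c_1^x,c_2^x,c_3^x$ and threshold $|g(x)|$ yields $\E_{y,z}\omega^{A_x(y,z)-A_x(z,y)}\geq|g(x)|^8$. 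Averaging over $x$ and applying Jensen's inequality for the convex map $t\mapsto t^8$, we obtain
$$p^{-\arank(T-T_{(2\,3)})}=\E_{x,y,z}\omega^{T(x,y,z)-T(x,z,y)}\geq\E_x|g(x)|^8\geq\delta^8,$$
and hence $\arank(T-T_{(2\,3)})\leq 8\log_p(1/\delta)$.

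By the symmetry observation above, the analogous bound $\arank(T-T_\sigma)\leq 8\log_p(1/\delta)$ holds for each transposition $\sigma\in\mathfrak S_3$. For a $3$-cycle $\pi=\sigma_1\sigma_2$, decompose $T-T_\pi=(T-T_{\sigma_1})+(T_{\sigma_1}-T_{\sigma_1\sigma_2})$. The relabeling $h'_i=h_{\sigma_1(i)}$ of the integration variables shows that $\arank(T_{\sigma_1}-T_{\sigma_1\sigma_2})=\arank(T-T_{\sigma_2})$, and therefore subadditivity of analytic rank (\cite[Theorem 1.5]{Lov19}) yields $\arank(T-T_\pi)\leq 16\log_p(1/\delta)$, establishing the claim for every $\pi\in\mathfrak S_3$.

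The main obstacle is the correct packaging of the six factors $b_2,\ldots,b_7$ into three $1$-bounded functions of $y$, $z$, and $y+z$ for fixed $x$, so that \cref{thm:gt-sym} becomes directly applicable; once this bilinear-in-$(y,z)$ setup is in hand, the remaining steps---Jensen's inequality, symmetric relabeling across transpositions, and subadditivity for the $3$-cycles---are routine.
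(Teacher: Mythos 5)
Your proof is correct and follows essentially the same approach as the paper: fix one variable, package the remaining six bounded functions into three bounded functions of $y$, $z$, $y+z$, apply \cref{thm:gt-sym} pointwise, average via convexity, and then handle the $3$-cycles by subadditivity of analytic rank. The paper treats $\pi=(12)$ by fixing $z$ where you treat $\pi=(23)$ by fixing $x$; your write-up is slightly more explicit about the packaging of the $b_i$'s and about the relabeling identity $\arank(T_{\sigma_1}-T_{\sigma_1\sigma_2})=\arank(T-T_{\sigma_2})$, but the substance is identical.
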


\begin{proof}
Suppose $\pi=(12)$. By the triangle inequality, we see that there exists a function $\delta\colon V\to[0,1]$ and 1-bounded functions $b_{1,z},b_{2,z},b_{3,z}\colon V\to\C$ such that
\[\abs{\E_{x,y}b_{1,z}(x)b_{2,z}(y)b_{3,z}(x+y)\omega^{T(x,y,z)}}\geq\delta(z)\] for all $z\in V$ and additionally $\E_{z\in V}\delta(z)\geq \delta$.

By \cref{thm:gt-sym} for all $z\in V$, \[\E_{x,y}\omega^{T(x,y,z)-T(y,x,z)}\geq\delta(z)^8.\] Averaging over $z$ and applying convexity,
\[p^{-\arank(T-T_{(12)})}=\E_{x,y,z}\omega^{T(x,y,z)-T(y,x,z)}\geq\E_{z}\delta(z)^8\geq\delta^8,\]so $\arank(T-T{(12)})\leq 8\log_p(1/\delta)$. The same holds for all transpositions. Since every permutation in $\mathfrak S_3$ can be written as the product of at most two transpositions, we conclude by the subadditivity of analytic rank. 
\end{proof}

\begin{prop}
\label{thm:first-sym}
Let $T\colon V^3\to \F_p$ be a trilinear form. Suppose that $T$ is close to symmetric in the sense that $\prank(T-T_\pi)\leq r$ for all permutations $\pi\in\mathfrak S_3$. Then there exists a subspace $U\leq V$ with $\codim U\leq 5r$ such that $T|_U\colon U^3\to\F_p$ is a symmetric trilinear form.
\end{prop}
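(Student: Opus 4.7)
The plan is to find a small collection of linear forms on $V$ whose common kernel absorbs all of $T$'s asymmetry. The key structural observation is that every partition-rank-$1$ trilinear form on $V^3$ factors as the product of a univariate linear form and a bilinear form in the remaining two variables: in the defining expression $R(h_I)S(h_{[3]\setminus I})$, the index set $I$ must have size $1$ or $2$, so exactly one of $R,S$ is a linear form in a single variable. For each non-identity $\pi\in\mathfrak S_3$, the hypothesis $\prank(T-T_\pi)\le r$ then lets me fix a decomposition
\[T-T_\pi=\sum_{i=1}^{r}\ell_{\pi,i}\cdot B_{\pi,i},\]
which in turn produces $r$ univariate linear forms $\ell_{\pi,1},\ldots,\ell_{\pi,r}\colon V\to\F_p$ associated to $\pi$.

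I would then set $U=\bigcap_{\pi\neq\id}\bigcap_{i=1}^{r}\ker\ell_{\pi,i}$. This subspace is cut out by at most $5r$ linear forms (five non-identity permutations contributing $r$ each), so $\codim U\le 5r$. On $U^3$, each summand $\ell_{\pi,i}\cdot B_{\pi,i}$ vanishes: whichever of the three slots the linear factor $\ell_{\pi,i}$ occupies, it is evaluated at a vector in $U$, where $\ell_{\pi,i}$ is zero. Hence $(T-T_\pi)|_{U^3}\equiv 0$, i.e., $T|_U=T_\pi|_U$ for every $\pi\in\mathfrak S_3$, which is precisely the statement that $T|_U$ is a symmetric trilinear form.

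There is essentially no obstacle here: the argument is a direct consequence of the factored form of partition-rank-$1$ trilinear tensors, together with the trivial fact that making the linear factor vanish on $U$ is cheaper (one codimension per term) than trying to control the bilinear factor. I note in passing that using only a generating set of $\mathfrak S_3$ (say the two transpositions $(12)$ and $(23)$) would in fact yield the sharper bound $\codim U\le 2r$; but the weaker bound $5r$ stated in the proposition is harmless for the downstream symmetrization, and the uniform all-permutations version avoids any bookkeeping about which permutations to invoke.
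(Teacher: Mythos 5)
Your proof is correct and follows essentially the same argument as the paper: take the partition-rank decompositions of $T-T_\pi$ for the non-identity permutations, collect the univariate linear factors, and let $U$ be their common kernel. Your side remark is also valid: since $(12)$ and $(23)$ generate $\mathfrak S_3$ and $\pi\mapsto T_\pi$ is a group action, requiring $T|_U=T_{(12)}|_U$ and $T|_U=T_{(23)}|_U$ already forces $T|_U$ to be symmetric, giving the sharper bound $\codim U\le 2r$ (the paper simply uses all five non-identity permutations for convenience).
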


\begin{proof}
By definition, there exists a partition rank decomposition of each $T-T_\pi$ (for $\pi\in\mathfrak S_3\setminus\{\id\}$) as the sum of at most $r$ terms, each of which is the product of a linear form and a bilinear form. Let $U$ be the subspace of $V$ where all of the linear forms involved in this decomposition vanish. Since there are at most $5r$ linear forms in this decomposition, we have the desired bound on the codimension of $U$. Furthermore, since $(T-T_\pi)|_U$ is identically zero for each $\pi\in\mathfrak S_3$, we see that $T|_U$ is symmetric.
\end{proof}

\begin{rem}
This result immediately implies that there exists a symmetric trilinear form $S\colon V^3\to\F_p$ with $\prank(T-S)\leq 15r$. To see this, pick an arbitrary decomposition $V=U\oplus W$. Then define $S$ by $S(u_1+ w_1,u_2+ w_2,u_3+ w_3)=T|_U(u_1,u_2,u_3)$ for all $u_1,u_2,u_3\in U$ and $w_1,w_2,w_3\in W$. Since $T|_U$ is a symmetric trilinear form it follows that $S$ is a symmetric trilinear form. By \cref{thm:close-in-rank},  we have the bound $\prank(T-S)\leq 15r$.
\end{rem}

\cref{thm:first-sym} easily handles the initial symmetrization step. To complete the symmetrization, we need to find some $R\in\nCSM^3(V)$ such that $\prank(T-R)$ is small. For $p\geq 5$ this is trivial, since every symmetric trilinear form is an $\nCSM$ and actually a $\CSM$.

For $p=3$ every symmetric trilinear form is also an $\nCSM$ though not necessarily a $\CSM$. We note however, that for $p=3$ every $\nCSM$ is close to a $\CSM$. This allows us to prove the $U^4$-inverse theorem over $\F_3^n$ with classical polynomials.

\begin{prop}
\label{thm:f3-csm}
Let $T\colon V^3\to\F_3$ be a symmetric trilinear form. Then there exists a codimension 1 subspace $U\leq V$ such that $T|_U\in\CSM^3(U)$.
\end{prop}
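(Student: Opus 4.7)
The plan is to exhibit $U$ as the kernel of the single linear form $Q(x) := T(x,x,x)$. Unwinding the definitions, for $p = k = 3$ the condition that a symmetric trilinear form lies in $\CSM^3$ is exactly that $T(x,x,x) = 0$ for every $x$ in the underlying space (the non-classical relation is automatic since it reduces to symmetry when $k = p$, and the extra classical axiom becomes $T(h_1,h_1,h_1) = 0$). So it suffices to find a codimension-at-most-1 subspace on which this cubic diagonal vanishes.

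First I would show that $Q\colon V \to \F_3$ defined by $Q(x) = T(x,x,x)$ is actually $\F_3$-linear. Using symmetry and trilinearity one expands
\[
Q(x+y) = T(x,x,x) + 3\,T(x,x,y) + 3\,T(x,y,y) + T(y,y,y),
\]
and in characteristic $3$ the two middle terms vanish, giving $Q(x+y) = Q(x) + Q(y)$. Scalar homogeneity is automatic from $Q(ax) = a^3 Q(x)$ combined with the Frobenius identity $a^3 = a$ on $\F_3$. Hence $Q$ is a linear form.

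Taking $U := \ker Q$ then gives a subspace of codimension at most $1$ on which $T(u,u,u) = 0$ for every $u \in U$, so $T|_U$ is a symmetric trilinear form satisfying the classical axiom and thus lies in $\CSM^3(U)$. (If $Q \equiv 0$ we may take $U = V$, which has codimension $0 \leq 1$.)

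I expect no real obstacle here: the only slightly non-routine observation is that the symmetry together with the $p = 3$ identity $3 = 0$ forces the cross terms in $Q(x+y)$ to disappear, so that a \emph{cubic} expression becomes linear. This is a characteristic-$3$ phenomenon that fails, for instance, in characteristic $2$, which is exactly why more work is required for $p = 2$ in \cref{sec:symmetrization}.
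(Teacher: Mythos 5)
Your proof is correct and takes essentially the same approach as the paper: observe that the only extra constraint for $\CSM^3$ over $\F_3$ is the vanishing of the diagonal cubic $T(x,x,x)$, show this diagonal map is linear because the cross terms come with a factor of $3=0$, and take $U$ to be its kernel. (One minor quibble: for $k=p$ the non-classical axiom is vacuous rather than ``reducing to symmetry,'' since it would need $k-p+1\geq 2$ variables to even be stated — but this does not affect your argument.)
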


\begin{proof}
The only additional condition that $T$ needs to satisfy to be a $\CSM$ is $T(x,x,x)=0$. Thus if $U$ is any subspace of $V$ such that $T(x,x,x)=0$ for all $x\in U$, we would have $T|_U\in\CSM^3(U)$.

Consider the map $x\mapsto T(x,x,x)\in\F_3$. Because $T$ is a symmetric trilinear form, this map is linear. Indeed, $T(x+y,x+y,x+y)=T(x,x,x)+3T(x,x,y)+3T(x,y,y)+T(y,y,y)=T(x,x,x)+T(y,y,y)$. Defining $U$ to be the codimension 1 subspace on which $T(x,x,x)$ vanishes, we have the desired conclusion.
\end{proof}

Applying \cref{thm:first-sym}, \cref{thm:f3-csm}, and \cref{thm:close-in-rank} we conclude the following.

\begin{cor}
\label{thm:f3-csm-cor}
Fix $p\geq 3$. Let $T\colon V^3\to\F_p$ be a trilinear form that satisfies $\prank(T-T_\pi)\leq r$ for all permutations $\pi\in\mathfrak S_3$. Then there exists $S\in\CSM^3(V)$ such that $\prank(T-S)\leq 15r+3$.
\end{cor}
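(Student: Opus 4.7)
The plan is to chain together the three cited lemmas, with no real new ideas required. First I would apply \cref{thm:first-sym} to the hypothesis $\prank(T-T_\pi)\leq r$, producing a subspace $U_1\leq V$ of codimension at most $5r$ on which $T|_{U_1}$ is symmetric trilinear. Then I would pass to a subspace $U\leq U_1$ on which $T|_U\in\CSM^3(U)$: when $p=3$ this requires one further step via \cref{thm:f3-csm}, which cuts down by a single dimension; when $p\geq 5$ the classical condition on a trilinear form is vacuous (the defining identity involves $p\geq 5$ coinciding arguments, which exceeds $3$), so I may simply take $U=U_1$. Either way $\codim_V U\leq 5r+1$.

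Next I would extend $T|_U$ to a CSM on all of $V$ by the construction used in the remark following \cref{thm:first-sym}: pick a direct-sum decomposition $V=U\oplus W$ and define $S\colon V^3\to\F_p$ by
\[S(u_1+w_1,u_2+w_2,u_3+w_3)=T|_U(u_1,u_2,u_3)\]
for $u_i\in U$, $w_i\in W$. Multilinearity and symmetry transfer from $T|_U$ to $S$, and the classical cubic identity $S(x,x,x)=0$ descends from $T|_U\in\CSM^3(U)$; hence $S\in\CSM^3(V)$. Because $T-S$ vanishes identically on $U^3$, \cref{thm:close-in-rank} yields
\[\prank(T-S)\leq 3\codim U\leq 3(5r+1)=15r+3,\]
as desired.

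There is essentially no obstacle here, since all the substantive work is encapsulated in the three invoked lemmas. The only items worth checking carefully are the case split on $p$, which ensures that the reduction lands in $\CSM^3$ rather than merely in the symmetric trilinear forms, and the fact that the naive extension $S$ preserves the classical condition; both are immediate.
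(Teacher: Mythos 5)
Your proof is correct and follows exactly the route the paper intends (the paper states this corollary as an immediate consequence of Proposition~\ref{thm:first-sym}, Proposition~\ref{thm:f3-csm}, and Lemma~\ref{thm:close-in-rank} without writing out the details). The case split on $p$, the extension construction from the remark, and the final rank bound are all handled correctly.
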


For $p=2$ the situation is more complicated. It is known, due to examples of Green and Tao \cite{GT09} and Lovett, Meshulam, and Samorodnitsky \cite{LMS11} that classical polynomials are not sufficient for the $U^4$-inverse theorem over $\F_2$. This means that we will not be able to find a $\CSM$ that is close to $T$. However the following argument shows that $T$ is close to an $\nCSM$.

\begin{prop}
\label{thm:f2-ncsm}
Let $T\colon V^3\to\F_2$ be a symmetric trilinear form. Suppose there are 1-bounded functions $b_1,\ldots,b_7\colon V\to\C$ such that
\[\abs{\E_{x,y,z\in V}b_1(x)b_2(y)b_3(z)b_4(x+y)b_5(x+z)b_6(y+z)b_7(x+y+z)(-1)^{T(x,y,z)}}\geq\delta.\]
Then there exists a subspace $U\leq V$ satisfying $\codim U\leq 8\log_2(1/\delta)$ such that $T|_U\in\nCSM^3(U)$.
\end{prop}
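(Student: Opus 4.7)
The non-classical condition for $p=2$, $k=3$ reduces to $T(x,x,y)=T(x,y,y)$ for all $x,y$, so if I define $Q(x,y) := T(x,x,y) + T(x,y,y)$, then $T|_U \in \nCSM^3(U)$ if and only if $Q|_{U\times U} \equiv 0$. A short calculation using the symmetry of $T$ and the fact that in characteristic $2$ both $T(x,x,y)$ and $T(x,y,y)$ are bilinear in $(x,y)$ (linearity in the repeated variable uses $2T=0$) shows that $Q$ is a symmetric bilinear form on $V$. The plan is to prove $\arank(Q) \leq 8\log_2(1/\delta)$; since $Q$ is bilinear, $\arank(Q)$ equals the ordinary rank, and taking $U = \ker(Q \colon V\to V^*)$ then yields a subspace of codimension at most $8\log_2(1/\delta)$ on which $Q$ vanishes.

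To obtain the rank bound I substitute $z \mapsto z+y$ in the hypothesis. Trilinearity gives $T(x,y,z+y) = T(x,y,z) + T(x,y,y)$, and (using $\mathrm{char}=2$) the seven-fold cube product becomes
\[
\hat g(x,y,z) = \bigl[b_1(x)\,b_7(x+z)\bigr]\bigl[b_2(y)\,b_3(y+z)\bigr]\bigl[b_4(x+y)\,b_5(x+y+z)\bigr]\cdot b_6(z),
\]
which for each fixed $z$ factors as a $2$-variable cube product $\tilde b_1^{(z)}(x)\,\tilde b_2^{(z)}(y)\,\tilde b_3^{(z)}(x+y)$ times the scalar $b_6(z)$, each $\tilde b_i^{(z)}$ being $1$-bounded. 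Setting $\hat A_z(x,y) := T(x,y,z) + T(x,y,y)$ (bilinear in $(x,y)$ for each fixed $z$) and
\[
\mu(z) := \abs{\E_{x,y}\tilde b_1^{(z)}(x)\,\tilde b_2^{(z)}(y)\,\tilde b_3^{(z)}(x+y)\,(-1)^{\hat A_z(x,y)}},
\]
the substituted hypothesis reduces to $\E_z \mu(z) \geq \delta$.

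The crucial observation is that although $\hat A_z$ itself depends on $z$, its antisymmetrization does not: using the symmetry of $T$,
\[
\hat A_z(x,y) + \hat A_z(y,x) = T(x,y,z) + T(y,x,z) + T(x,y,y) + T(y,x,x) = T(x,y,y) + T(x,x,y) = Q(x,y),
\]
independent of $z$. For each $z$ with $\mu(z) > 0$, \cref{thm:gt-sym} therefore gives $\arank(Q) \leq 8\log_2(1/\mu(z))$, equivalently $\mu(z) \leq 2^{-\arank(Q)/8}$. Averaging in $z$ then yields $\delta \leq \E_z \mu(z) \leq 2^{-\arank(Q)/8}$, i.e., $\arank(Q) \leq 8\log_2(1/\delta)$, as required.

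The main creative step is the choice of substitution $z \mapsto z+y$: it preserves the cube-product structure of the hypothesis, it brings the term $T(x,y,y)$ into the phase, and, most importantly, it makes the antisymmetrization of the resulting bilinear form $\hat A_z$ equal to the $z$-independent obstruction $Q$. Once this substitution is identified, everything else is essentially forced by \cref{thm:gt-sym}.
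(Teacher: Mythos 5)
Your proof is correct and takes essentially the same approach as the paper: after a change of variables that preserves the seven-fold cube-product structure, fix one variable to get a two-variable cube product with bilinear phase, apply \cref{thm:gt-sym}, and observe that the antisymmetrization of that phase is exactly the obstruction $Q(x,y)=T(x,x,y)+T(x,y,y)$ and is independent of the fixed variable. The only cosmetic differences are the choice of substitution ($z\mapsto z+y$ vs.\ the paper's $z=x+w$) and whether the spare variable is fixed by averaging before or after invoking the lemma; both yield the same bound.
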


\begin{proof}
Make the change of variables $z=x+w$. Then the hypothesis becomes
\begin{align*}
\delta
&\leq\abs{\E_{x,y,w\in V}b_1(x)b_2(y)b_3(x+w)b_4(x+y)b_5(w)b_6(x+y+w)b_7(y+w)(-1)^{T(x,x,y)+T(x,y,w)}}\\
&=\abs{\E_w b_5(w)\E_{x,y}(b_1(x)b_3(x+w))(b_2(y)b_7(y+w))(b_4(x+y)b_6(x+y+w))(-1)^{T(x,x,y)+T(x,y,w)}}.
\end{align*}

By averaging (and the 1-boundedness of $b_5$), there exists $w_0\in V$ such that \[\delta\leq\abs{\E_{x,y}(b_1(x)b_3(x+w_0))(b_2(y)b_7(y+w_0))(b_4(x+y)b_6(x+y+w_0))(-1)^{T(x,x,y)+T(x,y,w_0)}}.\]

Consider the map $A\colon V^2\to\F_2$ defined by $A(x,y)=T(x,x,y)+T(x,y,w_0)$. We claim that $A$ is bilinear. Using the symmetry and trilinearity of $T$,
\begin{align*}
A(x+x',y)
&=T(x+x',x+x',y)+T(x+x',y,w_0)\\
&=T(x,x,y)+2T(x,x',y)+T(x',x',y)+T(x,y,w_0)+T(x',y,w_0)\\
&=A(x,y)+A(x',y).
\end{align*}
Similarly,
\begin{align*}
A(x,y+y')
&=T(x,x,y+y')+T(x,y+y',w_0)\\
&=T(x,x,y)+T(x,x,y')+T(x,y,w_0)+T(x,y',w_0)\\
&=A(x,y)+A(x,y').
\end{align*}

Since $A$ is bilinear, we can apply \cref{thm:gt-sym} to conclude that 
\[\delta^8\leq\E_{x,y}(-1)^{A(x,y)-A(y,x)}.\] Note that 
\[A(x,y)-A(y,x)=T(x,x,y)+T(x,y,w_0)-T(y,y,x)-T(y,x,w_0)=T(x,x,y)-T(x,y,y)\] by the symmetry of $T$.

Define $B\colon V^2\to\F_2$ by $B(x,y)=T(x,x,y)-T(x,y,y)$. Note that $B$ is bilinear. (This can be seen since $B(x,y)=A(x,y)-A(y,x)$ or by direct computation.) We have shown that $\arank B\leq 8\log_2(1/\delta)$. Let $U$ be the nullspace of $B$.\footnote{For the purpose of the following argument we could take $U$ to either be the left or right nullspace of $B$, though since $B$ is antisymmetric its left and right nullspaces agree.} This satisfies $\codim U=\prank B=\arank B\leq 8\log_2 (1/\delta)$. Furthermore $B(x,y)=0$ for all $x,y\in U$, which implies that $T|_U$ is an $\nCSM$.
\end{proof}

This essentially implies the desired symmetrization result. To put the result in the form we need, we use the following application of the Cauchy-Schwarz inequality which will also appear again later in the paper.

\begin{lemma}
\label{thm:multiaffine-cs}
Let $\phi\colon V^3\to\F_p$ be a triaffine form. Suppose there are 1-bounded functions $b_1,\ldots,b_7\colon V\to\C$ such that
\[\abs{\E_{x,y,z\in V}b_1(x)b_2(y)b_3(z)b_4(x+y)b_5(x+z)b_6(y+z)b_7(x+y+z)\omega^{\phi(x,y,z)}}\geq\delta.\]
Let $T\colon V^3\to\F_p$ be the trilinear part of $\phi$. Then there exist 1-bounded functions $b'_1,\ldots,b'_7\colon V\to\C$ such that
\[\abs{\E_{x,y,z\in V}b'_1(x)b'_2(y)b'_3(z)b'_4(x+y)b'_5(x+z)b'_6(y+z)b'_7(x+y+z)\omega^{T(x,y,z)}}\geq\delta^8.\]
\end{lemma}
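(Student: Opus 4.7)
The plan is to apply the Cauchy--Schwarz inequality three times, once in each of the three variables, sequentially eliminating the non-trilinear parts of $\phi$. Write
\[\phi(x,y,z) = T(x,y,z) + B_{12}(x,y) + B_{13}(x,z) + B_{23}(y,z) + L_1(x) + L_2(y) + L_3(z) + c,\]
where $T$ is trilinear, the $B_{ij}$ are bilinear, the $L_i$ are linear, and $c$ is constant. The factors $\omega^{L_1(x)}$, $\omega^{L_2(y)}$, $\omega^{L_3(z)}$, and $\omega^c$ depend on at most one of $x,y,z$, so they can be absorbed directly into $b_1,b_2,b_3$. Hence we may assume $\phi = T + B_{12} + B_{13} + B_{23}$.

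Next, apply Cauchy--Schwarz in the variable $z$: the key observation is that because $B_{12}(x,y)$ does not depend on $z$, the difference $\phi(x,y,z) - \phi(x,y,z-k)$ equals $T(x,y,k) + B_{13}(x,k) + B_{23}(y,k) + L_3(k)$, so the bilinear form $B_{12}$ is eliminated. The ``doubled'' factors $b_i(u)\overline{b_i(u-k)}$ for $i \in \{3,5,6,7\}$ (the $b_i$'s that depend on $z$) become new 1-bounded functions of the shifted arguments, which, after a change of variables absorbing the auxiliary copy of $z$, can be rearranged into a 7-function expression in the new variables $(x,y,k)$. Now iterate: Cauchy--Schwarz in $y$ eliminates $B_{13}$ and introduces a shift $\ell$; Cauchy--Schwarz in $x$ eliminates $B_{23}$ and introduces a shift $m$. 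After three iterations the phase has become the pure trilinear form $T(m,\ell,k)$, the density bound has been squared three times to become $\delta^{2^3} = \delta^8$, and renaming $(m,\ell,k) \to (x,y,z)$ matches the lemma's conclusion.

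The main obstacle, and the technical heart of the argument, is verifying that the 7-function structure is preserved through all three Cauchy--Schwarz steps. Each step produces ``derivative'' factors $b_i(u)\overline{b_i(u-h)}$ which depend on two coordinates (the original coset argument and the new shift variable), and these factors must be repackaged into seven new 1-bounded functions sitting in the appropriate coset slots $\{x,y,z,x+y,x+z,y+z,x+y+z\}$ of the new variables. The repackaging uses explicit substitutions in the spirit of the change of variables $u'' = u+u'+v$ employed in the proof of \cref{thm:gt-sym}, together with the replacement of 1-bounded sub-expressions by new 1-bounded functions at each step. This bookkeeping is routine but intricate, and constitutes the only real content of the proof.
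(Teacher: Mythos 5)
Your proposal is correct and takes essentially the same route as the paper: three successive applications of Cauchy--Schwarz, one per variable, squaring $\delta$ each time and eliminating the lower-degree parts of $\phi$ until only the trilinear part $T$ remains. The only organizational difference is that you average to fix the auxiliary shift variable (and repackage into the seven-function form) after each Cauchy--Schwarz step, whereas the paper performs all three Cauchy--Schwarz applications first, observes that the resulting eightfold alternating sum of $\phi$ equals $T(x-x',y-y',z-z')$, and then does a single change of variables and averaging at the end; both give the same $\delta^8$ bound.
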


\begin{proof}
This result follows from three applications of the Cauchy-Schwarz inequality, one in each variable. We have 
\begin{align*}
\delta^2
&\leq\abs{\E_{y,z}b_2(y)b_3(z)b_6(y+z)\E_x b_1(x)b_4(x+y)b_5(x+z)b_7(x+y+z)\omega^{\phi(x,y,z)}}^2\\
&\leq \paren{\E_{y,z}\abs{b_2(y)b_3(z)b_6(y+z)}^2}\\
&\qquad\qquad\cdot\left(\E_{x,x',y,z}b_1(x)\ol{b_1(x')}b_4(x+y)\ol{b_4(x'+y)}b_5(x+z)\ol{b_5(x'+z)}\right.\\
&\qquad\qquad\qquad\cdot\left.b_7(x+y+z)\ol{b_7(x'+y+z)}\omega^{\phi(x,y,z)-\phi(x',y,z)}\right).
\end{align*}
The 1-boundedness of $b_2,b_3,b_6$ lets us remove the first term. 

The next application of the Cauchy-Schwarz inequality gives
\begin{align*}
\delta^4
&\leq \E_{x,x',y,y',z}b_4(x+y)\ol{b_4(x'+y)}\ol{b_4(x+y')}{b_4(x'+y')}b_7(x+y+z)\ol{b_7(x'+y+z)}\\
&\qquad\qquad\cdot\ol{b_7(x+y'+z)}{b_7(x'+y'+z)}\omega^{\phi(x,y,z)-\phi(x',y,z)-\phi(x,y',z)+\phi(x',y',z)}.
\end{align*}
The third gives
\begin{align*}
\delta^8
&\leq \E_{x,x',y,y',z,z'}b_7(x+y+z)\ol{b_7(x'+y+z)}\ol{b_7(x+y'+z)}{b_7(x'+y'+z)}\\
&\qquad\qquad\cdot\ol{b_7(x+y+z')}{b_7(x'+y+z')}{b_7(x+y'+z')}\ol{b_7(x'+y'+z')}\\
&\qquad\qquad\cdot\omega^{\phi(x,y,z)-\phi(x',y,z)-\phi(x,y',z)+\phi(x',y',z)-\phi(x,y,z')+\phi(x',y,z')+\phi(x,y',z')-\phi(x',y',z')}.
\end{align*}

Since $T$ is defined to be the trilinear part of the triaffine form $\phi$, the exponent in the last term is equal to $T(x-x',y-y',z-z')$. Applying a change of variables and averaging, we can find $x_0,y_0,z_0$ such that
\begin{align*}
\delta^8
&\leq \E_{x,y,z}b_7(x+y+z+x_0+y_0+z_0)\ol{b_7(y+z+x_0+y_0+z_0)}\ol{b_7(x+z+x_0+y_0+z_0)}{b_7(z+x_0+y_0+z_0)}\\
&\qquad\qquad\cdot\ol{b_7(x+y+x_0+y_0+z_0)}{b_7(y+x_0+y_0+z_0)}{b_7(x+x_0+y_0+z_0)}\ol{b_7(x_0+y_0+z_0)}\omega^{T(x,y,z)}.
\end{align*}
This expression is of the desired form, completing the proof.
\end{proof}

\begin{cor}
\label{thm:f2-ncsm-cor}
Let $T\colon V^3\to\F_2$ be a trilinear form and let $b_1,\ldots,b_7\colon V\to\C$ be 1-bounded functions such that
\[\abs{\E_{x,y,z\in V}b_1(x)b_2(y)b_3(z)b_4(x+y)b_5(x+z)b_6(y+z)b_7(x+y+z)(-1)^{T(x,y,z)}}\geq\delta.\]
Then there exists $S\in\nCSM^3(V)$ such that $\prank(T-S)\leq 432\log_2(1/\delta)$.
\end{cor}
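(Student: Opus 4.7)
The strategy is to chain Proposition \ref{thm:first-sym} (subspace symmetrization) with Proposition \ref{thm:f2-ncsm} (subspace nCSM-passage), after transferring the correlation down to the symmetric subspace, and then extend back to $V$ via Lemma \ref{thm:close-in-rank}. First I would apply Corollary \ref{thm:gt-symmetry-cor} to the correlation hypothesis, obtaining $\arank(T - T_\pi) \leq 16 \log_2(1/\delta)$ for each $\pi \in \mathfrak{S}_3$. Converting to partition rank via Theorem \ref{thm:analytic-partition} gives $\prank(T - T_\pi) \leq r$ with $r$ of order $\log_2(1/\delta)$, so Proposition \ref{thm:first-sym} produces a subspace $U_0 \leq V$ of codimension at most $5r$ on which $T|_{U_0}$ is a symmetric trilinear form.

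Next I would pass the correlation from $V$ down to $U_0$. Fix a complement $V = U_0 \oplus W$ and write the expectation over $V$ as an average over cosets; pigeonhole on the shifts in $W^3$ produces $v_x,v_y,v_z \in W$ such that the resulting expectation over $(x,y,z) \in U_0^3$ still has magnitude at least $\delta$. Expanding $T(x+v_x, y+v_y, z+v_z)$ by trilinearity exhibits a triaffine correlation whose trilinear part is $T|_{U_0}$, and Lemma \ref{thm:multiaffine-cs} lets me extract this trilinear part at the cost of an 8th power, yielding a correlation of $T|_{U_0}$ of magnitude at least $\delta^8$ with a new product of seven 1-bounded functions of the same shape. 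Since $T|_{U_0}$ is symmetric, Proposition \ref{thm:f2-ncsm} then produces a subspace $U_1 \leq U_0$ of codimension in $U_0$ at most $8 \log_2(1/\delta^8) = 64 \log_2(1/\delta)$ on which $T|_{U_1} \in \nCSM^3(U_1)$.

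Finally I would extend $T|_{U_1}$ to an element of $\nCSM^3(V)$: pick any decomposition $V = U_1 \oplus W'$ and define $S(u_1 + w_1, u_2 + w_2, u_3 + w_3) = T|_{U_1}(u_1, u_2, u_3)$ for $u_i \in U_1$, $w_i \in W'$. Multilinearity, symmetry, and the nCSM identity transfer from $U_1$ to $V$, so $S \in \nCSM^3(V)$. Since $(T-S)|_{U_1} \equiv 0$, Lemma \ref{thm:close-in-rank} gives $\prank(T - S) \leq 3\codim_V U_1 \leq 3(5r + 64 \log_2(1/\delta))$, which is $O(\log_2(1/\delta))$. The main obstacle I expect is the analytic-to-partition-rank conversion in the first step: the general bound in Theorem \ref{thm:analytic-partition} is only polynomial for $p=2, k=3$ (the linear bound of Cohen--Moshkovitz and of Adiprasito--Kazhdan--Ziegler being stated only for $p > 2$), whereas the final $432 \log_2(1/\delta)$ bound is linear. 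Achieving the claimed constant therefore requires a refined analysis exploiting the special structure of each $T - T_\pi$, which in characteristic $2$ is automatically symmetric in two of its three variables, to upgrade the bias estimate into a linear partition-rank bound rather than going through a generic rank-conversion.
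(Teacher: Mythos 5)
Your proposal follows essentially the same route as the paper: Corollary \ref{thm:gt-symmetry-cor} to bound $\arank(T - T_\pi)$, Proposition \ref{thm:first-sym} to pass to a subspace on which $T$ is symmetric, coset-averaging plus Lemma \ref{thm:multiaffine-cs} to transfer the correlation down to that subspace while extracting the trilinear part, Proposition \ref{thm:f2-ncsm} to shrink to a further subspace on which $T$ is an $\nCSM$, and finally an arbitrary extension plus Lemma \ref{thm:close-in-rank}. Your arithmetic $3\bigl(5r + 64\log_2(1/\delta)\bigr)$ reproduces the paper's $432\log_2(1/\delta)$ exactly when $r = 16\log_2(1/\delta)$.

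You are right to flag the $\arank$-to-$\prank$ step, but the reality is a bit different from what you guessed. You inserted Theorem \ref{thm:analytic-partition} to do the conversion, and then correctly observed that this conversion is only polynomial for $p=2$, $k=3$, so the stated linear constant cannot come from the generic bound. In fact the paper's own proof does not perform any conversion at all: it feeds the $\arank$ bound from Corollary \ref{thm:gt-symmetry-cor} directly into Proposition \ref{thm:first-sym}, whose hypothesis is phrased in terms of $\prank$ and whose proof genuinely requires a partition-rank decomposition, with no justification for the substitution. There is also no ``refined analysis exploiting the special structure'' in the paper that would rescue the linear constant; the observation that $T - T_\pi$ is automatically symmetric (indeed alternating) in two of its variables over $\F_2$ does not by itself give $\prank \leq O(\arank)$ for the trilinear form. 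So the explicit constant $432$ in the corollary statement is not supported by the written proof; what is supported, after inserting Theorem \ref{thm:analytic-partition} as you proposed, is $\prank(T - S) \leq O_p\bigl(\log_2(1/\delta)^{O(1)}\bigr)$. This slackness is harmless for the paper's purposes, since the proof of the main theorem uses this corollary only to produce a bound of that polynomial shape, but your caution about the constant was well placed and your version is actually the more rigorous one.
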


\begin{proof}
By \cref{thm:gt-symmetry-cor}, we have that $\arank(T-T_\pi)\leq 16\log_2(1/\delta)$ for all $\pi\in\mathfrak S_3$. Then applying \cref{thm:first-sym}, there exists a subspace $U\leq V$ with $\codim U\leq 80\log_2(1/\delta)$ such that $T|_U\colon U^3\to\F_2$ is a symmetric trilinear form.

By averaging, there exist cosets $x_0+U,y_0+U,z_0+U$ such that
\begin{align*}
\delta
&\leq\abs{\E_{\substack{x\in x_0+U\\y\in y_0+U\\z\in z_0+U}}b_1(x)b_2(y)b_3(z)b_4(x+y)b_5(x+z)b_6(y+z)b_7(x+y+z)(-1)^{T(x,y,z)}}\\
&=\abs{\E_{x,y,z\in U}b'_1(x)b'_2(y)b'_3(z)b'_4(x+y)b'_5(x+z)b'_6(y+z)b'_7(x+y+z)(-1)^{T(x_0+x,y_0+y,z_0+z)}}.
\end{align*}
Now by \cref{thm:multiaffine-cs}, this implies that 
\[\delta^8\leq\abs{\E_{x,y,z\in U}b''_1(x)b''_2(y)b''_3(z)b''_4(x+y)b''_5(x+z)b''_6(y+z)b''_7(x+y+z)(-1)^{T(x,y,z)}}.\]

We can now apply \cref{thm:f2-ncsm} to find a subspace $W\leq U$ such that $\codim_V W\leq 144\log_2(1/\delta)$ and such that $T|_W\in\nCSM^3(W)$. To conclude, all that remains is to extend $T|_W$ to an $\nCSM$ on all of $V$. To do this we first pick an arbitrary decomposition $V=W\oplus W'$. For $w_1,w_2,w_3\in W$ and $w_1',w_2',w_3'\in W'$ define $S(w_1+ w'_1,w_2+ w'_2,w_3+ w'_3)=T|_W(w_1,w_2,w_3)$. Clearly $S\in\nCSM^3(V)$ and $\prank(T-S)\leq 3\codim_V W\leq 432\log_2(1/\delta)$ by \cref{thm:close-in-rank}.
\end{proof}

\section{Proof of main theorem}
\label{sec:proof-of-main}

We now have the tools to prove the main theorem. We first record the result of Gowers and Mili\'cevi\'c that we require. This result can be found at the beginning of Section 10 of \cite{GM17} and its proof takes up the bulk of that paper.

\begin{thm}[{\cite{GM17}}]
\label{thm:gm-multiaffine-corr}
Fix a prime $p$ and $\delta>0$. There exists \[\epsilon=\paren{\exp\paren{\exp\paren{\exp\paren{O_p\paren{\log(1/\delta)^{O(1)}}}}}}^{-1}\]
such that the following holds. Let $V$ be a finite-dimensional $\F_p$-vector space. Given a function $f\colon V\to\C$ satisfying $\|f\|_\infty\leq1$ and $\|f\|_{U^{4}}>\delta$, there exists a triaffine form $\phi\colon V^3\to\F_p$ such that
\begin{equation}
\label{eq:tri-affine-cor}
\abs{\E_{x,h_1,h_2,h_3\in V}(\partial_{h_1}\partial_{h_2}\partial_{h_3}f)(x)\omega^{\phi(h_1,h_2,h_3)}}\geq\epsilon.
\end{equation}
\end{thm}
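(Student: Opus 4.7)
The plan is to follow the Gowers--Mili\'cevi\'c strategy sketched in the introduction, which starts from the $U^4$ hypothesis and produces a triaffine phase. First, I would unfold the norm via the identity $\|f\|_{U^4}^{16}=\E_{a,b}\|\partial_a\partial_b f\|_{U^2}^4$. Since $\|f\|_{U^4}>\delta$, a Markov argument supplies a subset $A\subseteq V^2$ of density $\delta^{O(1)}$ on which $\|\partial_a\partial_b f\|_{U^2}\geq \delta^{O(1)}$. The $U^2$-inverse theorem, which is Plancherel's identity, then yields a function $\psi\colon A\to V$ satisfying $\abs{\widehat{\partial_a\partial_b f}(\psi(a,b))}\geq\delta^{O(1)}$ for each $(a,b)\in A$.

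Next I would establish weak bilinearity of $\psi$. Expanding the Fourier coefficient as an exponential sum in $f$ and applying the Cauchy--Schwarz inequality several times in each variable, one derives approximate additive identities of the form $\psi(a_1+a_2,b)=\psi(a_1,b)+\psi(a_2,b)$ on a dense subset of triples $(a_1,a_2,b)$, and symmetrically in the $b$ variable. These are formal Cauchy--Schwarz manipulations and impose no restriction on $p$.

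The main obstacle, and the technical heart of Gowers--Mili\'cevi\'c's proof, is \emph{upgrading} this weakly bilinear $\psi$ to a genuinely biaffine map $\Psi\colon V^2\to V$ agreeing with $\psi$ on a dense subset of $V^2$. This requires an elaborate iterative scheme: one repeatedly invokes quantitative Bogolyubov--Ruzsa--Sanders-type theorems to extract large approximate subspaces inside iterated sumsets of the graph of $\psi$, then propagates the biaffine structure in both variables simultaneously while controlling the defect between $\psi$ and candidate biaffine extensions. Each application of Sanders's quasipolynomial Bogolyubov lemma costs a logarithmic factor, and the accumulated iteration is what produces the triply-exponential bound on $1/\epsilon$.

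Finally, granted the global biaffine $\Psi$, one passes to the triaffine conclusion by taking one additional derivative. Writing $\widehat{\partial_{h_1}\partial_{h_2} f}(\Psi(h_1,h_2))=\E_x \partial_{h_1}\partial_{h_2} f(x)\omega^{-\Psi(h_1,h_2)\cdot x}$ (after identifying $V$ with its dual) and differentiating in a third shift $h_3$, the linear phase $\omega^{-\Psi(h_1,h_2)\cdot x}$ becomes $\omega^{-\Psi(h_1,h_2)\cdot h_3}$, which is triaffine in $(h_1,h_2,h_3)$. A standard Cauchy--Schwarz in $h_3$ then converts the density lower bound on $\abs{\widehat{\partial_{h_1}\partial_{h_2}f}(\Psi(h_1,h_2))}$ into the triaffine correlation in the statement, with $\phi(h_1,h_2,h_3)=-\Psi(h_1,h_2)\cdot h_3$.
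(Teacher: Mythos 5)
This theorem is not proved in the paper; it is imported verbatim from Gowers and Mili\'cevi\'c \cite{GM17}, and the paper explicitly notes that its proof ``can be found at the beginning of Section 10 of \cite{GM17} and its proof takes up the bulk of that paper.'' So there is no in-paper proof to compare yours against. That said, your sketch faithfully reproduces the high-level outline that the paper itself gives in Section 1.2: unfold via $\|f\|_{U^4}^{16}=\E_{a,b}\|\partial_a\partial_bf\|_{U^2}^4\leq\E_{a,b}\|\widehat{\partial_a\partial_bf}\|_\infty^2$, apply Markov to get a dense $A\subseteq V^2$ and a large-Fourier-coefficient selector $\psi$, establish weak Freiman-bilinearity of $\psi$ by Cauchy--Schwarz, upgrade to a globally biaffine $\Psi$, and then pass to a triaffine correlation by squaring the Fourier coefficient (your ``differentiate in $h_3$'' step; the density indicator $1_B(h_1,h_2)$ disappears by positivity once you pass to $\E_{h_1,h_2}|\widehat{\partial_{h_1}\partial_{h_2}f}(\Psi(h_1,h_2))|^2$). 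So your account is consistent with the paper's framing.

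The one thing to flag is that your third paragraph --- the upgrade from local bilinearity to a genuine biaffine $\Psi$ on a dense subset --- is not a step you carry out but a gesture at one, and that single step constitutes essentially the whole of the (very long) paper \cite{GM17}. Invoking ``Bogolyubov--Ruzsa--Sanders-type theorems'' and ``an elaborate iterative scheme'' is a reasonable description of the flavour, but it is not a proof, and nothing in your write-up would let a reader reconstruct it. Since the present paper treats \cref{thm:gm-multiaffine-corr} as a black box, that is an acceptable scope, but you should state clearly that you are summarizing \cite{GM17} rather than proving the result.
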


\begin{proof}[Proof of \cref{thm:inverse}]
Applying \cref{thm:gm-multiaffine-corr}, we find a triaffine form $\phi\colon V^3\to\F_p$ such that
\[\abs{\E_{x,h_1,h_2,h_3\in V}(\partial_{h_1}\partial_{h_2}\partial_{h_3}f)(x)\omega^{\phi(h_1,h_2,h_3)}}\geq\epsilon.\]

Let $T$ be the trilinear part of $\phi$. First average over $x$ and expand out the derivative to conclude that there exist 1-bounded functions $b_1,\ldots,b_7\colon V\to\C$ such that
\[\abs{\E_{x,y,z\in V}b_1(x)b_2(y)b_3(z)b_4(x+y)b_5(x+z)b_6(y+z)b_7(x+y+z)\omega^{\phi(x,y,z)}}\geq\epsilon.\]
Then applying \cref{thm:multiaffine-cs} and then \cref{thm:gt-symmetry-cor}, we see that $\arank(T-T_\pi)\leq 128\log_p(1/\epsilon)$ for all $\pi\in\mathfrak S_3$. By \cref{thm:analytic-partition}, we conclude that $\prank(T-T_\pi)\leq O_p(\log_p(1/\epsilon)^{O(1)})$ for all $\pi\in\mathfrak S_3$.

Now for $p\geq 3$, we can apply \cref{thm:f3-csm-cor} to find $S\in\CSM^3(V)$ such that $\prank(T-S)\leq O_p(\log_p(1/\epsilon)^{O(1)})$. Then by \cref{thm:integrate-csm}, there exists a classical cubic polynomial $P\in\Poly_{\leqslant 3}(V\to\F_p)$ such that $d^3P=S$. For $p=2$, we apply \cref{thm:f2-ncsm-cor} to find $S\in\nCSM^3(V)$ such that $\prank(T-S)\leq O_p(\log_p(1/\epsilon)^{O(1)})$. Then by \cref{thm:integrate-ncsm}, there exists a non-classical cubic polynomial $P\in\Poly_{\leqslant 3}(V\to\R/\Z)$ such that $d^3P=S$.

In either case we define $r=\max\{\prank(T-S),\log_p(1/\epsilon)\}$ satisfying $r\leq O_p(\log_p(1/\epsilon)^{O(1)})$. We have a decomposition $T-S=\sum_{i=1}^r\gamma_i$ where each $\gamma_i\colon V^3\to\F_p$ is the product of a linear form in one of the variables with a bilinear form in the other two variables. Write $\Gamma\colon V^3\to\F_p^{2r}$ for the list of $2r$ linear and bilinear forms that factor the $\gamma_i$'s. For example, if $\gamma_1(x,y,z)=\alpha(x)\beta(y,z)$, then we define $\Gamma_1(x,y,z)=\alpha(x)$ and $\Gamma_2(x,y,z)=\beta(y,z)$.

Define $g=f\omega^P$. Note that $(\partial_{h_1}\partial_{h_2}\partial_{h_3}g)(x)=(\partial_{h_1}\partial_{h_2}\partial_{h_3}f)(x)\omega^{S(h_1,h_2,h_3)}$. The inequality we started with can now be written as
\[\abs{\E_{x,h_1,h_2,h_3}b(h_1,h_2,h_3)\omega^{\sum_{i=1}^r\gamma_i(h_1,h_2,h_3)}(\partial_{h_1}\partial_{h_2}\partial_{h_3}g)(x)}\geq \epsilon\]
where $b(h_1,h_2,h_3)$ is a product of linear and bilinear phase functions. We first remove the middle term.

By averaging, there exists $c\in\F_p^{2r}$ such that
\begin{equation}
\label{eq:gamma-correlation}
\abs{\E_{x,h_1,h_2,h_3}b(h_1,h_2,h_3)\mathbbm1(\Gamma(h_1,h_2,h_3)=c)(\partial_{h_1}\partial_{h_2}\partial_{h_3}g)(x)}\geq \epsilon p^{-2r}.
\end{equation}
Now by Fourier analysis, we can write
\[\mathbbm1(\Gamma(h_1,h_2,h_3)=c)=\E_{\xi\in\F_p^{2r}}\omega^{(\Gamma(h_1,h_2,h_3)-c)\cdot \xi}.\] Plugging this expression into \cref{eq:gamma-correlation}, by averaging again, there exists $\xi_0\in\F_p^{2r}$ such that 
\[\abs{\E_{x,h_1,h_2,h_3}b(h_1,h_2,h_3)\omega^{(\Gamma(h_1,h_2,h_3)-c)\cdot\xi_0}(\partial_{h_1}\partial_{h_2}\partial_{h_3}g)(x)}\geq \epsilon p^{-2r}.\] Since $\Gamma$ is a list of linear and bilinear forms, the middle term, $\omega^{(\Gamma(h_1,h_2,h_3)-c)\cdot\xi_0}$, is a product of linear and bilinear phase functions. Thus we conclude that \[\abs{\E_{x,h_1,h_2,h_3}b'(h_1,h_2,h_3)(\partial_{h_1}\partial_{h_2}\partial_{h_3}g)(x)}\geq \epsilon p^{-2r},\]where $b'$ is a product of linear and bilinear phase functions.

Write \[b'(h_1,h_2,h_3)=\omega^{\beta_1(h_2,h_3)+\beta_2(h_1,h_3)+\beta_3(h_1,h_2)+\alpha_1(h_1)+\alpha_2(h_2)+\alpha_3(h_3)}.\] By averaging, there exist $x,h_1\in V$ such that
\[\abs{\E_{h_2,h_3}b'(h_1,h_2,h_3)(\partial_{h_1}\partial_{h_2}\partial_{h_3}g)(x)}\geq \epsilon p^{-2r}.\] Expanding this out, we see that this inequality is of the correct form to apply \cref{thm:gt-sym} which then implies that $\arank(\beta_1-(\beta_1)_{(12)})\leq 8(2r+\log_p(1/\epsilon))\leq 24r$. The same is true of $\beta_2,\beta_3$.

The next step is to find $\beta'_1,\beta'_2,\beta'_3\in\nCSM^2(V)$ such that $\prank(\beta_i-\beta'_i)\leq 48r$. First note that for bilinear forms we have $\arank(\beta_i)=\prank(\beta_i)$. Thus there is a subspace $U\leq V$ of codimension at most $24r$ such that $\beta_i-(\beta_i)_{(12)}$ vanishes on $U\times U$. This is the only condition we need to imply that $\beta_i|_U$ is an nCSM. Extending $\beta_i|_U$ to $\beta_i'\in\nCSM^2(V)$ arbitrarily, we have $\prank(\beta_i-\beta_i')\leq 48r$ by \cref{thm:close-in-rank}. Finally, by \cref{thm:integrate-csm}, there exist non-classical quadratic polynomials $Q_i\in\Poly_{\leqslant 2}(V\to\R/\Z)$ such that $d^2Q_i=\beta'_i$.

We repeat the same averaging and Fourier analysis argument from above to conclude that
\[\abs{\E_{x,h_1,h_2,h_3}b''(h_1,h_2,h_3)\omega^{\beta'_1(h_2,h_3)+\beta'_2(h_1,h_3)+\beta'_3(h_1,h_2)}(\partial_{h_1}\partial_{h_2}\partial_{h_3}g)(x)}\geq \epsilon p^{-290r},\] where $b''(h_1,h_2,h_3)$ is a product just of linear phase functions, say $b''(h_1,h_2,h_3)=\omega^{L_1(h_1)+L_2(h_2)+L_3(h_3)}$.

Now define
\begin{align*}
g_{000}&=f\omega^{P},\\
g_{001}&=f\omega^{P+Q_1},\\
g_{010}&=f\omega^{P+Q_2},\\
g_{011}&=f\omega^{P+Q_1+Q_2+L_3},\\
g_{100}&=f\omega^{P+Q_3},\\
g_{101}&=f\omega^{P+Q_1+Q_3+L_2},\\
g_{110}&=f\omega^{P+Q_2+Q_3+L_1},\\
g_{111}&=f\omega^{P+Q_1+Q_2+Q_3+L_1+L_2+L_3}.
\end{align*}

One can verify that the inequality above can be rewritten as
\[\begin{split}
\left|\E_{x,h_1,h_2,h_3}\ol{g_{000}(x)}\right.&{g_{001}(x+h_1)g_{010}(x+h_2)}\ol{g_{011}(x+h_1+h_2)}{g_{100}(x+h_3)}\\
&\left.\cdot \ol{g_{101}(x+h_1+h_3)g_{110}(x+h_2+h_3)}{g_{111}(x+h_1+h_2+h_3)}\right|\geq\epsilon p^{-290r}.
\end{split}\]
The left-hand side is in the correct form to apply the  Gowers-Cauchy-Schwarz inequality \cite[Lemma B.1.(iv)]{TZ12}, which implies that $\|f\omega^P\|_{U^3}\geq\epsilon p^{-290r}\geq p^{-O_p(\log_p(1/\epsilon)^{O(1)})}$. Finishing with a single application of the $U^3$-inverse theorem, we prove the desired result.
\end{proof}

\section{Concluding remarks}
\label{sec:concl}

The symmetrization results in this paper are very specific to trilinear forms. We pose the following conjecture, which seems surprisingly difficult and may be of independent interest. 

\begin{conj}
\label{thm:sym-conj}
Let $T\colon V^k\to \F_p$ be a $k$-linear form. Suppose that $T$ is close to symmetric in the sense that $\prank(T-T_\pi)\leq r$ for all permutations $\pi\in\mathfrak S_k$. Then there exists a symmetric $k$-linear form $S\colon V^k\to\F_p$ such that $\prank(S-T)\leq r'$ where $r'$ is a polynomial function of $r$.
\end{conj}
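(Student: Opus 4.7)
The plan is to induct on $k$, with \cref{thm:first-sym} giving the base case $k=3$, and to reduce the inductive step to producing a subspace $U \leq V$ of codimension polynomial in $r$ on which $T|_U$ is symmetric. Given such a $U$, picking any complement $V = U \oplus W$ and defining $S(u_1+w_1,\ldots,u_k+w_k) = T|_U(u_1,\ldots,u_k)$ yields a symmetric $k$-linear form on $V$; by \cref{thm:close-in-rank}, $\prank(T-S) \leq k \cdot \codim(U)$, as required.

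To build $U$, I would imitate the proof of \cref{thm:first-sym}: write each $T - T_\pi$ (for $\pi \neq \id$) as a sum of at most $r$ partition-rank-$1$ tensors, giving at most $(k!-1)r$ summands in total, and then try to force $U$ into the common vanishing locus of all of them. Summands of the form $\ell(h_j) \cdot R(h_{[k] \setminus j})$ with $\ell$ linear are handled exactly as in the $k=3$ case by adding $\ker \ell$ to the constraints, costing codimension $1$ each. The new difficulty, first appearing for $k \geq 4$, is ``balanced'' summands $R(h_I) \cdot S(h_{[k] \setminus I})$ with $|I|, |[k] \setminus I| \geq 2$: neither factor is linear, and the bilinear-or-higher factors may have full rank, so there is no small-codimension subspace on which such a summand automatically vanishes.

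To handle balanced summands, one natural strategy is iterative regularization. If a factor $R$ in some summand has unusually high rank, one could use \cref{thm:analytic-partition} to translate the partition-rank bound on $T - T_\pi$ into an analytic-rank identity and then exploit the cancellations forced by antisymmetry to extract hidden low-rank structure that can be absorbed into the symmetric part before restarting. A more algebraic alternative is to construct a ``projector'' onto the symmetric $k$-linear forms that mimics the naive symmetrization $\frac{1}{k!}\sum_\pi T_\pi$ but is defined entirely in terms of the differences $T - T_\pi$, thereby avoiding division by $k!$; understanding the modular $\F_p\mathfrak{S}_k$-module structure on the space of $k$-linear forms would likely be essential here.

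The main obstacle, which I do not currently see how to circumvent, is exactly these balanced partition-rank-$1$ summands. Even the toy case $k=4$ with a single term $R(h_1,h_2)S(h_3,h_4)$ antisymmetric under the transposition $(12)$ seems to demand a genuinely new structural result about how tensor products of high-rank bilinear forms can cancel in low characteristic, and it is this step that I expect to absorb the bulk of the work.
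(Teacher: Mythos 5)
The statement you are asked to prove is posed in the paper as a \emph{conjecture}, not a theorem: the paper offers no proof of it, and your proposal (correctly) ends by conceding that you cannot close the argument. So the honest answer is that there is no paper proof to compare against.

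More importantly, the obstacle you identify is not merely a gap in your argument --- it is fatal to the statement itself. The paper's concluding remark records that Mili\'cevi\'c has since produced a counterexample disproving \cref{thm:sym-conj} for $k=4$ and $p=2$ (arXiv:2112.14755). That is precisely the ``toy case'' you single out as demanding a genuinely new structural result about tensor products of high-rank bilinear forms. Your intuition was sound: the balanced partition-rank-$1$ summands $R(h_I)S(h_{[k]\setminus I})$ with $|I|,|[k]\setminus I|\geq 2$ cannot in general be killed by passing to a small-codimension subspace, and there is no analogue of the rank-nullity trick that works for the linear-times-$(k-1)$-linear terms. Your reduction (find $U$ with $\codim U$ small and $T|_U$ symmetric, then extend by zero and apply \cref{thm:close-in-rank}) is exactly the scheme that succeeds for $k=3$ in \cref{thm:first-sym}, where every partition-rank-$1$ term necessarily has a linear factor; for $k\geq 4$ this structural luck runs out, and Mili\'cevi\'c's example shows it cannot be recovered. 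If you want a provable target, the open question that remains is for which pairs $(k,p)$ (other than $p>k$, which is trivial, and $(k,p)=(4,2)$, which is now known to fail) the conjecture still holds, and whether the stronger hypothesis in \cref{thm:full-sym-conj} --- a correlation inequality rather than a bare rank bound --- salvages the conclusion.
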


Note that this conjecture is only interesting if $p\leq k$, since for $p>k$, the symmetric form $S=\frac1{k!}\sum_\pi T_{\pi}$ can easily be seen to satisfy $\prank(S-T)\leq k!r$.

Combined with the recent work of Gowers and Mili\'cevi\'c \cite{GM20}, \cref{thm:sym-conj} is the missing ingredient which would suffice to give quantitative bounds on the $U^{p+1}$-inverse theorem. For the $U^{k+1}$-inverse theorem with $k>p$ there is additional symmetrization necessary to produce a non-classical symmetric multilinear form from a symmetric form.

\begin{conj}
\label{thm:full-sym-conj}
Let $T\colon V^k\to \F_p$ be a $k$-linear form. Suppose that $T$ satisfies
\[\abs{\E_{x_1,\ldots,x_k}\prod_{I\subseteq[k]}b_I\paren{\sum_{i\in I}x_i}\omega^{T(x_1,\ldots,x_k)}}\geq\epsilon\] for some 1-bounded functions $b_I\colon V\to\C$. Then there exists $S\in\nCSM^k(V)$ such that $\prank(S-T)\leq r'$ where $r'$ is a polynomial function of $\log(1/\epsilon)$.
\end{conj}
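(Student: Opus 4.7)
The plan is to follow the two-step template of \cref{sec:symmetrization}: first reduce $T$ to a symmetric $k$-linear form using the (still conjectural) \cref{thm:sym-conj}, then upgrade the symmetric form to a non-classical symmetric multilinear form by bounding the relevant ``non-classical defect'' forms. The hypothesis is essentially a Gowers-cube inner product against $\omega^T$, which is exactly the starting point of the symmetrization arguments in \cref{sec:symmetrization}, so the $k$-variable analogue of \cref{thm:multiaffine-cs} (iterate Cauchy-Schwarz once in each variable) strips away the $b_I$'s and leaves us with a pure average of $\omega^{T(x_1,\ldots,x_k)}$ against a smaller collection of 1-bounded factors.

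Applying the Green-Tao symmetry lemma \cref{thm:gt-sym} to pairs of variables after a suitable averaging over the remaining $k-2$ variables yields $\arank(T-T_\pi) \leq O_k(\log_p(1/\epsilon))$ for every transposition $\pi$, hence for every $\pi \in \mathfrak S_k$ by subadditivity of analytic rank. Passing to partition rank via \cref{thm:analytic-partition}, \cref{thm:sym-conj} produces a symmetric $k$-linear form $S_0$ with $\prank(T-S_0) \leq r_1$, polynomial in $\log(1/\epsilon)$. Now the Fourier/averaging trick from the main proof around \eqref{eq:gamma-correlation} absorbs the rank-$r_1$ correction $T-S_0$ into the $b_I$ factors at a cost of $p^{-O(r_1)}$, reducing us to the case where $T$ is itself symmetric.

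For symmetric $T$, membership in $\nCSM^k(V)$ is governed by the \emph{non-classical defect} forms
\[D(h_1,\ldots,h_{k-p+1}) = T(\underbrace{h_1,\ldots,h_1}_p,h_2,\ldots,h_{k-p+1}) - T(h_1,\underbrace{h_2,\ldots,h_2}_p,h_3,\ldots,h_{k-p+1})\]
(and analogues obtained by permuting which block of $p$ inputs is identified). By symmetry of $T$ and the Frobenius cancellation $\binom{p}{j}\equiv 0 \pmod p$ for $0<j<p$, each $D$ is genuinely multilinear of arity $k-p+1$. The goal is to show $\prank(D) \leq r_2$ polynomial in $r_1$. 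Generalizing \cref{thm:f2-ncsm}, the idea is to substitute $x_1 = \cdots = x_p = u$ in the correlation, Cauchy-Schwarz to collapse the $b_I$'s into auxiliaries in fewer variables, and shift by carefully chosen fixed parameters $w_0$ so that a genuinely multilinear form $A$ on $V^{k-p+1}$ emerges whose symmetrization defect $A - A_\sigma$ equals $D$ modulo lower rank; then \cref{thm:gt-sym} yields the bound on $\arank(D)$. Finally, pass to a subspace $U \leq V$ of codimension $O_{k,p}(r_2)$ on which every defect $D$ vanishes, so that $T|_U \in \nCSM^k(U)$; extend $T|_U$ arbitrarily to $S \in \nCSM^k(V)$ and conclude via \cref{thm:close-in-rank}.

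The crux is the construction of the auxiliary $A$ in the last step. In \cref{thm:f2-ncsm} with $k=3,p=2$, the single substitution $z=x+w$ together with the identity $T(x+x',x+x',y) = T(x,x,y)+T(x',x',y)$ over $\F_2$ combines miraculously to produce the bilinear form $A(x,y)=T(x,x,y)+T(x,y,w_0)$, whose antisymmetry in $(x,y)$ is exactly the defect. For general $(k,p)$, identifying $p$ variables yields a form that is a Frobenius-degree-$p$ polynomial in $u$, and one must combine it with shifted copies along several directions to kill the intermediate-degree parts and isolate a truly multilinear auxiliary. Moreover, there are many inequivalent defect forms (one per choice of which block of size $p$ to identify and to which block it is compared), and the substitution/Cauchy-Schwarz scheme producing an auxiliary whose symmetry defect is a given $D$ must be tailored to each. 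Managing the combinatorics of these substitutions while staying inside the class of correlations allowed by the hypothesis is where the difficulty lies, and it is not clear a priori that the total rank cost remains polynomial in $r_1$.
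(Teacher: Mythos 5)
The statement you are trying to prove is \cref{thm:full-sym-conj}, which the paper deliberately poses as an \emph{open conjecture}; there is no proof in the paper to compare against, and the paper's own Remark at the end of \cref{sec:concl} explicitly says that the validity of \cref{thm:full-sym-conj} remains undetermined. Your proposal is therefore an attempted resolution of an open problem, not a reconstruction of a known proof. Evaluated on those terms, it has a concrete obstruction: your first reduction routes through \cref{thm:sym-conj}, but that same Remark records that Mili\'cevi\'c has disproved \cref{thm:sym-conj} for $k=4$, $p=2$. Since \cref{thm:full-sym-conj} is a statement for all $k,p$, your chain of reductions breaks already in that case, and you cannot recover from it without a genuinely new idea that exploits the stronger ``cube correlation'' hypothesis rather than the weaker ``close to all permutations'' hypothesis. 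This is precisely the distinction the paper's Remark is drawing attention to when it notes that \cref{thm:full-sym-conj} might survive even though \cref{thm:sym-conj} does not: the extra analytic input in \cref{thm:full-sym-conj} should be used more directly, not discarded after one round of Green--Tao symmetrization.

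Beyond that, your own write-up honestly flags that the second step (generalizing \cref{thm:f2-ncsm} to control the non-classical defect forms for general $k,p$) is unresolved, and indeed this is where all the real difficulty lies. The $k=3$, $p=2$ argument in \cref{thm:f2-ncsm} hinges on a very specific identity over $\F_2$ and a single substitution $z=x+w$, and for general $(k,p)$ the substitution scheme would have to cancel many intermediate Frobenius-degree terms across many inequivalent defects simultaneously, with no guarantee of a polynomial rank bound. What you have is a plausible \emph{strategy outline}, consistent with the two-step symmetrization/integration architecture the paper uses for $k=3$, but it does not constitute a proof, and its first step is provably blocked as stated. If you want to pursue this, the productive direction is to ask which weakening of \cref{thm:sym-conj} survives Mili\'cevi\'c's example and whether the full correlation hypothesis of \cref{thm:full-sym-conj} supplies it.
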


Combined with the work of Gowers and Mili\'cevi\'c and this paper's integration result, \cref{thm:integrate-ncsm}, this result would give quantitative bounds on the $U^{k+1}$-inverse theorem over $\F_p^n$ for all $k,p$.

\begin{rem}
After the first version of this paper appeared on arXiv, Mili\'cevi\'c gave a counterexample disproving \cref{thm:sym-conj} in the case of $k=4$ and $p=2$ \cite{Mil21}. It remains an interesting question to determine for which pairs $(k,p)$ \cref{thm:sym-conj} holds as well as to determine the validity of \cref{thm:full-sym-conj}.
\end{rem}

%%% AUTHOR: optional acknowledgments here
\section*{Acknowledgments} %%  you may comment this out if no Ackno
The author thanks Aaron Berger for many helpful discussions.

%%% AUTHOR:
%%% Bibliography goes here. Note that the arXiv cannot process bibtex
%%% or biber bibliographies.  Example of acceptable bibliograpy format:
\bibliographystyle{amsplain}

%% AUTHOR: You can generate such a bibliography from a .bib file by 
%% running pdflatex/bibtex/pdflatex/pdflatex and then pasting the .bbl file
%% between \begin{thebibliography} and \end{bibliography}

%%% AUTHOR: Include a short description of each author following the
%%% structure below. Use the same short tags used previously.  
%%% Use \imageat{} and \imagedot{} instead of "@" and "." in
%%% email addresses-this replaces the symbols with graphics to avoid 
%%% e-mail address harvesting from the .pdf file
\begin{dajauthors}
\begin{authorinfo}[jtidor]
  Jonathan Tidor\\
  Massachusetts Institute of Technology,\\
  Cambridge, MA 02139, USA\\
  jtidor\imageat{}mit\imagedot{}edu
\end{authorinfo}
\end{dajauthors}

\end{document}